\newtheorem{theo}{Theorem}[section]
\newtheorem{lemma}[theo]{Lemma}
\newtheorem{defi}[theo]{Definition}
\newtheorem{prop}[theo]{Proposition}
\newtheorem{conj}[theo]{Conjecture}
\newtheorem{cor}[theo]{Corollary}
\newtheorem{remark}[theo]{Remark}
\newtheorem{ques}[theo]{Question}
\numberwithin{equation}{section}
\mathchardef\mhyphen="2D
\def\A{{\mathbb A}}
\def\bL{\mathbb{L}}
\def\Z{\mathbb{Z}}
\def\bG{\mathbb{G}}
\def\wt{\widetilde}
\def\bR{{\mathbf R}}
\def\bL{{\mathbf L}}
\def\bY{{\mathbf Y}}
\def\pre-tr{\operatorname{pre-tr}}
\def\Hom{\operatorname{Hom}}
\def\End{\operatorname{End}}
\DeclareMathOperator*{\colim}{colim}
\newcommand{\Ltens}[1]{%
  \mathbin{\mathop{\otimes}\displaylimits^{\bL}_{#1}}%
}
\newcommand{\bbar}{\overline}
\newcommand{\xto}{\xrightarrow}
\newcommand{\hto}{\hookrightarrow}
\newcommand{\mk}{\mathrm k}
\newcommand{\cF}{{\mathcal F}}
\newcommand{\cG}{{\mathcal G}}
\newcommand{\cO}{{\mathcal O}}
\newcommand{\cA}{{\mathcal A}}
\newcommand{\cB}{{\mathcal B}}
\newcommand{\cC}{{\mathcal C}}
\newcommand{\cS}{{\mathcal S}}
\newcommand{\cK}{{\mathcal K}}
\newcommand{\veps}{\varepsilon}
\newcommand{\un}{\underline}
\newcommand{\Fun}{\operatorname{Fun}}
\newcommand{\Perf}{\operatorname{Perf}}
\newcommand{\PsPerf}{\operatorname{PsPerf}}
\newcommand{\perf}{\operatorname{perf}}
\newcommand{\pspe}{\operatorname{pspe}}
\newcommand{\ch}{\operatorname{ch}}
\newcommand{\Ext}{\operatorname{Ext}}
\newcommand{\str}{\operatorname{str}}
\newcommand{\Spec}{\operatorname{Spec}}
\newcommand{\Ho}{\operatorname{Ho}}
\newcommand{\id}{\operatorname{id}}
\newcommand{\dgalg}{\operatorname{dgalg}}
\newcommand{\dgcat}{\operatorname{dgcat}}
\newcommand{\Mod}{\operatorname{Mod}}
\newcommand{\bw}{\mathrm{w}}
\title[Categorical smooth compactifications and generalized degeneration]
{Categorical smooth compactifications and generalized Hodge-to-de Rham degeneration}
\author{Alexander I. Efimov}
\address{Steklov Mathematical Institute of RAS, Gubkin str. 8, GSP-1, Moscow 119991, Russia\\
National Research University Higher School of Economics, Russian Federation}
\email{efimov@mccme.ru}
\thanks{The author is partially supported by Laboratory of Mirror Symmetry NRU HSE, RF government  grant, ag. N 14.641.31.0001}
\begin{document}

\begin{abstract} We disprove two (unpublished) conjectures of Kontsevich which state generalized versions of categorical Hodge-to-de Rham degeneration for smooth and for proper DG categories (but not smooth and proper, in which case degeneration is proved by Kaledin \cite{Ka}). In particular, we show that there exists a minimal $10$-dimensional $A_{\infty}$-algebra over a field of characteristic zero, for which the supertrace of $\mu_3$ on the second argument is non-zero.
 
As a byproduct, we obtain an example of a homotopically finitely presented DG category (over a field of characteristic zero) that does not have a smooth categorical compactification, giving a negative answer to a question of To\"en. This can be interpreted as a lack of resolution of singularities in the noncommutative setup.

We also obtain an example of a proper DG category which does not admit a categorical resolution of singularities in the terminology of \cite{KL} (that is, it cannot be embedded into a smooth and proper DG category).
\end{abstract}

%\keywords{Mac Lane homology and cohomology; Wieferich primes; critical points; spectral sequences; cubical construction.}

\maketitle

\tableofcontents

\section{Introduction}

Given a smooth algebraic variety $X$ over a field of characteristic zero, 
we have the Hodge-to-de Rham spectral sequence $E_1^{p,q}=H^q(X,\Omega^p_X)\Rightarrow H^{p+q}_{DR}(X).$ It is classically known that when $X$ is additionally proper, this spectral sequence degenerates at $E_1,$ that is, all differentials vanish. This follows from the classical Hodge theory for compact K\"ahler manifolds, and can be also proved algebraically \cite{DI}.

We recall the following fundamental result of Kaledin \cite{Ka}, see also \cite{M} for a different proof.

\begin{theo}\label{th:Kaledin_degen}\cite[Theorem 5.4]{Ka} Let $A$ be a smooth and proper DG algebra. Then the Hochschild-to-cyclic spectral sequence degenerates, so that we have an isomorphism $HP_{\bullet}(A)=HH_{\bullet}(A)((u)).$\end{theo}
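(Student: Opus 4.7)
The plan is to implement a noncommutative version of the Deligne--Illusie strategy for Hodge-to-de Rham degeneration. Since a smooth and proper DG algebra $A$ over a field $k$ of characteristic zero can be spread out over a finitely generated subring $R \subset k$, and since both smoothness, properness, and the Hochschild/cyclic invariants commute with base change in the relevant derived sense, it is enough to prove the statement after specializing to a perfect field of large enough positive characteristic $p$. The degeneration in characteristic zero will then follow from semicontinuity of dimensions of $E_r$-pages of the Hochschild-to-cyclic spectral sequence in a flat family.

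In characteristic $p$, I would first construct a \emph{noncommutative Cartier isomorphism}: a natural quasi-isomorphism between the Hochschild complex $HH_\bullet(A)^{(1)}$ (a Frobenius twist) and the $E_2$-page of a second, conjugate spectral sequence built from the $u$-adic (i.e., Hodge) filtration on the periodic cyclic complex $HP_\bullet(A)$. The construction uses the Tate cohomology of the cyclic group $\bbZ/p$ acting on the $p$-fold tensor power of the Hochschild complex, combined with a careful analysis of the compatibility of the $A_\infty$-structure with the cyclic symmetry; the upshot is that the mixed complex $(HH_\bullet(A), B)$ in characteristic $p$ admits an essentially unique lift of its Connes differential through a Frobenius-like operation.

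With this in hand, the conjugate spectral sequence converging to $HP_\bullet(A)$ has $E_2$-page identified with $HH_\bullet(A)^{(1)}((u))$, and one shows it degenerates at $E_2$ by a formal argument using the multiplicativity and compatibility of the Cartier map with the $u$-action. At this stage properness enters decisively: it guarantees that $HH_i(A)$ is finite-dimensional in each degree, so the total dimension bound
\[
\dim_k HP_n(A) \;\leq\; \sum_{i \equiv n \,(2)} \dim_k HH_i(A)
\]
coming from the original Hochschild-to-cyclic spectral sequence is finite, and conjugate degeneration realizes this bound as an equality. This forces the Hochschild-to-cyclic spectral sequence itself to degenerate, yielding $HP_\bullet(A) = HH_\bullet(A)((u))$.

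The main obstacle is the construction of the noncommutative Cartier isomorphism: classical Cartier uses the commutative product in an essential way, and in the DG/$A_\infty$ setting one must replace it by a chain of quasi-isomorphisms among mixed complexes governed by $\mathrm{Tate}$ cohomology of $\bbZ/p$, verify naturality and multiplicativity, and handle the homotopies coherently so that the resulting map is canonical. Once this is in place, the rest of the argument — reduction mod $p$, comparison of spectral sequences, and dimension counting using properness — is largely formal.
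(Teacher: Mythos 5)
The paper itself gives no proof of this theorem: it is quoted verbatim from Kaledin \cite{Ka}, so the only meaningful comparison is with Kaledin's actual argument. Your skeleton does track that argument — spreading out over a finitely generated subring $R\subset\mk$, specializing to large positive characteristic, a noncommutative Cartier quasi-isomorphism built from Tate cohomology of $\bbZ/p$ acting on the $p$-fold tensor power of the Hochschild complex, degeneration of the resulting conjugate spectral sequence, and a dimension count using properness to transfer degeneration back to the original spectral sequence. But there is a genuine gap at the central step: the degeneration of the conjugate spectral sequence in characteristic $p$ is \emph{not} ``a formal argument using multiplicativity and compatibility of the Cartier map with the $u$-action.'' It is exactly the noncommutative Deligne--Illusie step, and it requires two hypotheses your sketch never invokes: (i) a lift of $A$ to the second Witt vectors $\bbW_2(\mk)$, and (ii) a bound on $p$ relative to the homological amplitude of $A$ (Kaledin needs $A$ concentrated in a range of degrees small compared to $p$). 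Without the $\bbW_2$-lift the characteristic-$p$ statement is simply false — already in the commutative case there are smooth proper non-liftable varieties whose Hodge-to-de Rham spectral sequence does not degenerate — so no formal argument of the kind you describe can exist. In the reduction both hypotheses are harmless (the spread-out family reduced modulo $p^2$ provides the $\bbW_2$-lift, and $p$ may be taken arbitrarily large), but they must be stated and used; they are where the real work hides.

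A second, quieter gap: in characteristic $p$ the $u$-adic filtration on the periodic cyclic complex is badly behaved and the Hochschild-to-cyclic spectral sequence for $HP$ has genuine convergence problems (sum versus product totalizations no longer agree). Kaledin is forced to replace $HP$ by co-periodic cyclic homology $\overline{HP}$, defined via the product totalization; the conjugate spectral sequence and the Cartier identification of its initial page with Frobenius-twisted Hochschild homology live there, and a separate theorem — again using smoothness and properness — is needed to identify $\overline{HP}$ with $HP$ for smooth proper DG algebras. Your proposal has the conjugate spectral sequence converging to $HP$ itself, silently skipping this comparison. The closing dimension count (finiteness of $HH_\bullet(A)$ from properness forcing the inequality between $\dim HP_n$ and $\sum_{i\equiv n (2)}\dim HH_i$ to be an equality) and the semicontinuity argument over $\Spec R$ are fine as stated.
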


Here $u$ denotes a variable of degree $2.$

When applied to $\Perf(A)\simeq \Perf(X)$ for smooth and proper variety $X,$ Theorem \ref{th:Kaledin_degen} gives exactly the classical Hodge-to-de Rham degeneration.

In this paper we study some generalizations of Hodge-to-de-Rham degeneration to DG categories which are not smooth and proper.

Recall that for a proper DG algebra $B$ one has a pairing on $HH_{\bullet}(B)\otimes HH_{\bullet}(B^{op})\to \mk,$ introduced by Shklyarov \cite{S}. Kontsevich \cite{Ko} proposed the following generalization of Theorem \ref{th:Kaledin_degen}.

\begin{conj}\label{conj:degeneration_for_proper_intro} Let $B$ be a proper DG algebra. Then the composition map \begin{equation}\label{eq:composition_for_proper}(HH_{\bullet}(B)\otimes HC_{\bullet}(B^{op}))[1]\xto{\id\otimes\delta} HH_{\bullet}(B)\otimes HH_{\bullet}(B^{op})\to\mk\end{equation} is zero.\end{conj}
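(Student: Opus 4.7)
The plan is to reformulate the conjecture as a factorization property and then attempt to construct the required lift by exploiting the $S^1$-equivariance inherent to the Shklyarov trace.

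By the Connes SBI exact sequence
\[
\cdots \to HH_\bullet(B^{op}) \xrightarrow{\iota} HC_\bullet(B^{op}) \xrightarrow{S} HC_{\bullet-2}(B^{op}) \xrightarrow{\delta} HH_{\bullet-1}(B^{op}) \to \cdots,
\]
the image of $\delta$ is precisely $\ker\iota$. Hence Conjecture \ref{conj:degeneration_for_proper_intro} is equivalent to asking that the Shklyarov pairing $\Phi:HH_\bullet(B)\otimes HH_\bullet(B^{op})\to\mk$ factor through $\id\otimes\iota$, i.e., descend to a pairing $\Phi^{\mathrm{cyc}}:HH_\bullet(B)\otimes HC_\bullet(B^{op})\to\mk$.

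To produce $\Phi^{\mathrm{cyc}}$, recall that $\Phi$ is the composition
\[
HH_\bullet(B)\otimes HH_\bullet(B^{op}) \xrightarrow{\mathrm{sh}} HH_\bullet(B\otimes B^{op}) \xrightarrow{\Tr} \mk,
\]
where $\Tr$ is the Shklyarov trace induced by the evaluation bifunctor $\Perf(B)\otimes\Perf(B^{op})\to\Perf(\mk)$ (available thanks to properness of $B$) and $\mathrm{sh}$ is the shuffle product. Hochschild-homology functoriality for DG functors respects the Connes $S^1$-action, and the $S^1$-action on $HH_0(\mk)=\mk$ is trivial, so $\Tr$ factors canonically through cyclic homology, yielding $\Tr^{\mathrm{cyc}}:HC_\bullet(B\otimes B^{op})\to\mk$. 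It would then suffice to construct an asymmetric Künneth arrow $HH_\bullet(B)\otimes HC_\bullet(B^{op})\to HC_\bullet(B\otimes B^{op})$ making the evident square with $\mathrm{sh}$ and $\iota$ commute; composing with $\Tr^{\mathrm{cyc}}$ would deliver $\Phi^{\mathrm{cyc}}$ and complete the proof.

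The main obstacle is precisely the existence and trace-compatibility of this asymmetric Künneth map. For smooth and proper DG algebras it exists and upgrades the entire construction to Shklyarov's Mukai pairing on negative cyclic homology (where everything reduces to Theorem~\ref{th:Kaledin_degen}); in the merely proper setting, however, the diagonal bimodule is not perfect over $B\otimes B^{op}$ and the relevant cyclic-level Künneth need not be compatible with $\Tr$. One unconditional by-product of the $S^1$-equivariance of $\Tr$ is the Leibniz identity $\Phi(\delta\alpha,\beta)\pm\Phi(\alpha,\delta\beta)=0$, which only shows that the one-sided version of the conjecture with $B$ and $B^{op}$ swapped is equivalent to the stated one; closing either one-sided statement without the cyclic Künneth compatibility is the fragile step, and in light of the abstract it must fail.
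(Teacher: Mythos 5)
You correctly sensed that this statement cannot be proved: it is false, and the paper's treatment of it is a \emph{disproof} by explicit counterexample (announced already in the abstract, as you noticed). Your reduction via the SBI sequence --- the conjecture is equivalent to the Shklyarov pairing factoring through $\id\otimes\iota$ --- is sound, and your diagnosis of where the natural proof strategy breaks (no trace-compatible asymmetric K\"unneth arrow into $HC_\bullet(B\otimes B^{op})$ when the diagonal bimodule fails to be perfect) is consistent with the paper's motivating Proposition \ref{prop:cat_res_implies_prop}: whenever $\Perf(B)$ embeds quasi-fully-faithfully into $\Perf(A)$ with $A$ smooth and proper, the composition \eqref{eq:composition_for_proper} does vanish, by Theorem \ref{th:Kaledin_degen} applied to $A$. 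But ending with ``in light of the abstract it must fail'' leaves the entire mathematical content unaddressed. The failure of one proof strategy does not disprove the statement; one must exhibit a proper DG algebra on which the composition is nonzero, and by the proposition just cited any such example is automatically a proper DG category admitting no categorical resolution of singularities --- a nontrivial existence claim that your $S^1$-equivariance analysis gives no access to.

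What the paper actually does is quite different in kind. First, Corollary \ref{cor:corollary_on_m_3} makes the pairing computable on the chain level: for a finite-dimensional minimal $A_\infty$-algebra $B$ and closed homogeneous $a,b$ with $|a|+|b|=1$, the composition \eqref{eq:composition_for_proper} evaluated on $a\otimes b$ is, up to sign, $\str_B\bigl(v\mapsto (-1)^{(|b|+1)|v|}\mu_3(a,v,b)\bigr)$. So it suffices to build a minimal finite-dimensional $A_\infty$-algebra in which the supertrace of $\mu_3$ on the middle argument is nonzero. The paper glues $\mk[y]/y^3$ ($|y|=1$) and $\mk[x]/x^6$ along a one-dimensional bimodule $V=\mk\cdot z$ with $\mu_3^V(x,z,y)=z$, giving supertrace $1$. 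The genuinely hard step is the existence of this bimodule structure, which rests on Theorem \ref{th:nilpotence_and_factorization}(2): if $a\in H^0(A)$ satisfies $a^2=0$, then $\mk[x]\to A$ factors through $\mk[x]/x^6$ in $\Ho(\dgalg_{\mk})$. This is proved by reducing to the universal DG algebra $C=\mk\langle t_1,t_2\rangle$ with $dt_2=t_1^2$, computing $H^\bullet(\hat C)\cong\Ext^\bullet_{\mk[y]/y^3}(\mk,\mk)$, and running a $\bG_m$-equivariant obstruction theory in which the obstruction groups $HH^{n+1,0}(\mk[x]/x^6, H^{1-n}(C))$ vanish in weight zero (Lemma \ref{lem:computattion_of_HH_cohom} --- this is where the exponent $6$ is forced). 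Applied to $A=\End^{A_\infty}_{\mk[y]/y^3}(\mk)$, whose $H^0$ is the dual numbers, this produces exactly the required $\mu_3$. In short: your proposal correctly locates the fragile step of a would-be proof, but the statement's resolution requires constructing a concrete $A_\infty$-obstruction-theoretic counterexample, not analyzing the K\"unneth map.
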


Kontsevich also proposed a "dual" version of Conjecture \ref{conj:degeneration_for_proper_intro} for smooth DG algebras.

\begin{conj}\label{conj:degeneration_for_smooth_intro} Let $A$ be a smooth DG algebra. Then the composition $$K_0(A\otimes A^{op})\xto{\ch} (HH_{\bullet}(A)\otimes HH_{\bullet}(A^{op})_0\xto{\id\otimes\delta} (HH_{\bullet}(A)\otimes HC^-_{\bullet}(A^{op}))_1$$ vanishes on the class $[A]$ of the diagonal bimodule.\end{conj}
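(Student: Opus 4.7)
\emph{Proof plan.} The conjecture is (per the abstract) actually false, so the plan is to \emph{disprove} it by constructing a counterexample, guided by the hint that there exists a minimal $10$-dimensional $A_\infty$-algebra with nonvanishing supertrace of $\mu_3$ on the second argument.

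First, I would reformulate the conjecture via Koszul duality. For a smooth DG algebra $A$ one has a Koszul-dual proper (often finite-dimensional) minimal $A_\infty$-algebra $B$, and a comparison between suitable categories of modules. Under this duality the Hochschild complex of $A$ is dual, up to a shift, to that of $B$; the class $[A]$ of the diagonal bimodule corresponds to a canonical ``trace''/Euler element on the $B$-side; and the connecting map $\delta:HH_\bullet\to HC^-_\bullet[1]$ is transferred to the dual of Connes' operator on $HH_\bullet(B)$. Vanishing of the composition in Conjecture \ref{conj:degeneration_for_smooth_intro} should then be equivalent to vanishing of a canonical functional on $B$ built entirely from its $A_\infty$-operations.

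Second, I would make that functional explicit. For a minimal $A_\infty$-algebra both the Hochschild differential and Connes' operator admit explicit formulas in terms of $\mu_2,\mu_3,\ldots$. Tracing the Chern character of the diagonal through $\id\otimes\delta$, pairing with the dualized trace element, and using the $A_\infty$-relations to cancel the higher contributions should single out exactly the supertrace of $b\mapsto \mu_3(-,b,-)$, regarded as an endomorphism of $B$, as the first obstruction. Conjecture \ref{conj:degeneration_for_smooth_intro} thus reduces, via Koszul duality, to the claim that this particular supertrace must vanish for every finite-dimensional minimal $A_\infty$-algebra in characteristic zero.

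Third, I would construct an explicit minimal $A_\infty$-algebra $B$ of dimension $10$ whose operations satisfy the $A_\infty$-relations yet whose $\mu_3$-supertrace is nonzero. Concretely: choose a $\bbZ$-grading, parametrize $\mu_n$ by a small list of structure constants, impose $\sum\pm\mu\circ\mu=0$ as a quadratic system, and search for a solution on which the trace functional is nonzero. Higher operations $\mu_n$ for $n\ge 4$ are likely forced, since they are the ingredients available to absorb the $A_\infty$-obstruction produced by a nonvanishing $\mu_3$-trace.

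The main obstacle is the third step. The $A_\infty$-axioms are a highly nonlinear quadratic system, and keeping the ambient dimension as small as $10$ demands a careful choice of graded vector space and a sparse product structure; most naive attempts will either violate the $A_\infty$-relations or force the relevant supertrace to vanish by symmetry. The Koszul-dual reformulation in steps one and two is mainly a conceptual guide — it identifies precisely \emph{which} trace to compute — but the counterexample itself is a hand-crafted algebraic object whose detailed verification is the core of the argument.
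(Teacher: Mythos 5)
There is a genuine gap: your plan is aimed at the wrong conjecture. The $10$-dimensional minimal $A_{\infty}$-algebra with nonvanishing supertrace of $\mu_3$ is the paper's counterexample to Conjecture \ref{conj:degeneration_for_proper_intro} (the \emph{proper} case), where the supertrace criterion of Corollary \ref{cor:corollary_on_m_3} applies because one has Shklyarov's trace pairing $HH_{\bullet}(B)\otimes HH_{\bullet}(B^{op})\to\mk$ for a finite-dimensional algebra; and even there the paper does not solve a quadratic system by hand, but derives existence from Theorem \ref{th:nilpotence_and_factorization} via obstruction theory (the $\bG_m$-weight-zero Hochschild cohomology groups $HH^{n+1,0}(\mk[x]/x^6,H^{1-n}(C))$ vanish, yielding an $A_{\infty}$-morphism $\mk[x]/x^6\to\End^{A_{\infty}}_{\mk[y]/y^3}(\mk)$ and hence the glued algebra with $\mu_3(x,z,y)=z$). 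For the statement at hand — the \emph{smooth} case — no such pairing exists, and your bridge from one conjecture to the other via Koszul duality is unestablished: you would need (a) that the Koszul dual of your proper $B$ is smooth, (b) that the duality transports the specific class $\ch([A])$ of the diagonal together with the map $\id\otimes\delta$ into $HC^-_{\bullet}$, and (c) that the resulting obstruction is again the $\mu_3$-supertrace. None of these is proved, and the paper treats the two conjectures as genuinely independent, with disjoint counterexamples; it only observes a formal "duality" of their statements.

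The paper's actual route to disproving Conjecture \ref{conj:degeneration_for_smooth_intro} is different in kind. It first disproves the more general Conjecture \ref{conj:very_general} by an explicit Chern character computation for $\cO_D$ over $\Lambda_1\otimes\mk[\veps]$, with $D=\{x+\veps=0\}$, finding $(\id\otimes B)(\ch[\cO_D])=-\frac{d_{dR}x}{x^2}\otimes d_{dR}\veps\ne 0$ (Section \ref{sec:disproving_very_general}). It then makes this a statement about the \emph{diagonal} class of a single smooth algebra by a gluing trick (Theorem \ref{th:disproving_for_smooth}): embed $\Perf(\Lambda_1)\simeq\Perf_{\{p\}}(E)$ into $\Perf(E)$ for an elliptic curve $E$, and glue $B_E\otimes C$ with $C^{op}$ along a perfect bimodule $N$, so that $\ch(N)$ appears as the cross term of $\ch([A])$. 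The crucial nonvanishing input is global-geometric: $HH_F(x^{-1})=[x^{-1}]\ne 0$ in $HH_{-1}(E)=H^1(\cO_E)$ because no rational function on $E$ has a single simple pole — had one used $\A^1$ instead, the class would die, since $HH_{-1}(\A^1)=0$. A purely formal, local computation of the sort your Koszul-dual reduction would produce cannot detect this class, which is why your step two ("the obstruction is the $\mu_3$-supertrace") fails for the smooth conjecture: the obstruction there lives in $(HH_{\bullet}(A)\otimes HC^-_{\bullet}(A^{op}))_1$ and is detected by mapping to a geometric model, not by a trace. Your proposal would, if completed, redo the paper's Section \ref{sec:disproving_for_proper}; it does not yield a counterexample to the statement you were asked about.
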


The motivation for Conjectures \ref{conj:degeneration_for_proper_intro} and  \ref{conj:degeneration_for_smooth_intro} is explained in Propositions \ref{prop:smooth_comp_implies_smooth} and \ref{prop:cat_res_implies_prop} below. Here we mention that the results of \cite{KL} imply that Conjecture \ref{conj:degeneration_for_proper_intro} holds for proper DG algebras of algebro-geometric origin: that is, for DG algebras of the form $B=\bR\End(\cF),$ where $\cF\in\Perf_Z(X)$ is a perfect complex on a separated scheme $X$ of finite type over $\mk,$ supported on a {\it proper} closed subscheme $Z\subset X.$ Similarly, the (weak version of) results of \cite{E2} imply that Conjecture \ref{conj:degeneration_for_smooth_intro} holds for smooth DG algebras of the form $\bR\End(\cG),$ where $\cG\in D^b_{coh}(X)$ is a generator of the category $D^b_{coh}(X).$

 There is a closely related question formulated by B. To\"en \cite{To1}.

\begin{ques}\label{ques:Toen} Is it true that any homotopically finitely presented DG category $\cB$ is quasi-equivalent to a quotient $\cA/\cS,$ where $\cA$ is smooth and proper, and $\cS\subset\cA$ is a full subcategory?\end{ques}

Such a quotient presentation of $\cB$ is called a smooth categorical compactification. 

In this paper we disprove both Conjectures \ref{conj:degeneration_for_proper_intro} and \ref{conj:degeneration_for_smooth_intro}. As an application, we give a negative answer to Question \ref{ques:Toen}.

The starting point for our counterexamples is to disprove the main conjecture of \cite{E}, see Section \ref{sec:disproving_very_general}. A counterexample to Conjecture \ref{conj:degeneration_for_smooth_intro} is obtained in Section \ref{sec:disproving_version_for_smooth}. It is deduced from the results of Section \ref{sec:disproving_very_general} by some trick.

Finally, a counterexample to Conjecture \ref{conj:degeneration_for_proper_intro} is obtained in Section \ref{sec:disproving_for_proper}. It is deduced from our new result on nilpotent elements in the cohomology of a DG algebra (Theorem \ref{th:nilpotence_and_factorization}), which is of independent interest. In particular, we obtain an example of a proper DG algebra $B$ such that the DG category $\Perf(B)$ cannot be fully faithfully embedded into a saturated DG category. That is, it does not have a categorical resolution of singularities in the terminology of \cite{KL}. 

Section \ref{sec:disproving_for_proper} can be read independently from Sections \ref{sec:disproving_very_general} and \ref{sec:disproving_version_for_smooth}.

{\noindent{\bf Acknowledgements.}} I am grateful to Dmitry Kaledin, Maxim Kontsevich and Bertrand To\"en for useful discussions. 

\section{Preliminaries on DG categories and $A_{\infty}$-algebras}

\subsection{DG categories} For the introduction on DG categories, we refer the reader to \cite{Ke}. The references for DG quotients are \cite{Dr, Ke2}. For the model structures on DG categories we refer to \cite{Tab1, Tab2}, and for a general introduction on model categories we refer to \cite{Ho}. Everything will be considered over some base field $\mk.$
%To simplify the exposition we do not discuss set-theoretic issues, referring to \cite{T, TV}.  although. We will specify when we need $\mk$ to be perfect or to have characteristic zero. 

%We make some comments on our usage of terminology below. 
Mostly we will consider DG categories up to a quasi-equivalence. By a functor between DG categories we sometimes mean a quasi-functor. In some cases it is convenient for us to choose a concrete DG model or a concrete DG functor. By a commutative diagram of functors we usually mean the commutative diagram in the homotopy category $\Ho(\dgcat_{\mk}).$ Finally, we denote by $\Ho_M(\dgcat_{\mk})$ the Morita homotopy category of DG categories (with inverted Morita equivalences).

All modules are assumed to be right unless otherwise stated. For a small DG category $\cC$ and a $\cC$-module $M,$ we denote by $M^{\vee}$ the $\cC^{op}$-module $\Hom_{\cC}(M,\cC).$ We denote by $M^*$ the $\cC^{op}$-module $\Hom_{\cC}(M,\mk).$
%For a small DG category $\cC,$ we denote by $\bY:\cC\to$

Given a small DG category $\cC,$ we denote by $D(\cC)$ its derived category of DG $\cC$-modules. This is a compactly generated triangulated category. We denote by $D_{\perf}(\cC)$ the full triangulated subcategory of perfect $\cC$-modules. It coincides with the subcategory of compact objects.

Recall from  \cite{TV} that a $\cC$-module $M$ is pseudo-perfect if for each $x\in\cC,$ the complex $M(x)$ is perfect over $\mk$ (that is, $M(x)$ has finite-dimensional total cohomology). We denote by $D_{\pspe}(\cC)\subset D(\cC)$ the full triangulated subcategory of pseudo-perfect $\cC$-modules.

For any DG category $\cC,$ we denote by $[\cC]$ its (non-graded) homotopy category, which has the same objects as $\cC,$ and the morphisms are given by $[\cC](x,y)=H^0(\cC(x,y)).$ We use the the terminology of \cite[Definition 2.4]{TV} by calling $\cC$ triangulated if the Yoneda embedding provides an equivalence $[\cC]\xto{\sim}D_{\perf}(\cC).$ In this case $[\cC]$ is a Karoubi complete triangulated category. 

We denote by $\Mod_{\cC}$ the DG category of cofibrant DG $\cC$-modules in the projective model structure (these are exactly the direct summands of semifree DG $\cC$-modules).
We have $D(\cC)\simeq [\Mod_{\cC}],$ where $D(\cC)$ is the derived category of DG $\cC$-modules. 
We denote by $\bY:\cC\hto\Mod_{\cC}$ the standard Yoneda embedding given by $\bY(x)=\cC(-,x).$

%For a small DG category $\cC,$ we denote by $\Mod_{\cC}$ the DG category of cofibrant DG $\cC$-modules in the projective model structure (these are exactly the direct summands of semifree DG $\cC$-modules).
%We have $D(\cC)\simeq [\Mod_{\cC}],$ where $D(\cC)$ is the derived category of DG $\cC$-modules. 
%We denote by $\bY:\cC\hto\Mod_{\cC}$ the standard Yoneda embedding given by $\bY(x)=\cC(-,x).$

%The diagonal $\cC\mhyphen\cC$-bimodule is denoted by $I_{\cC}$ or just $\cC$ if this does not lead to confusion. We denote by $\cC^{!}\in D(\cC\otimes\cC^{op})$ the bimodule $\bR\Hom_{\cC\otimes\cC^{op}}(\cC,\cC_{\cC}\otimes {}_{\cC}\cC),$ where the subscript ${}_{\cC}$ is inserted to clarify which $\cC$-action (left or right) we consider.

We write $\Perf(\cC)\subset \Mod_{\cC}$ (resp. $\PsPerf(\cC)\subset\Mod_{\cC}$) for the full DG subcategory of perfect (resp. pseudo-perfect) $\cC$-modules.

%We have triangulated categories $D(\cC)\simeq [\Mod_{\cC}],$ $D_{\perf}(\cC)\simeq [\Perf(\cC)],$ $D_{\pspe}(\cC)\simeq [\PsPerf(\cC)].$ 

%We write $T^{\Kar}$ or $\bbar{T}$ for the homotopy Karoubi completion of a DG category $T;$ for example, $T^{\Kar}$ can be defined as a full DG subcategory of $\Perf(T),$ corresponding to the idempotent completion of $[T]\subset D_{\perf}(T).$

For a DG functor $\Phi:\cC_1\to\cC_2$ between small DG categories, we denote by $\bL\Phi^*:D(\cC_1)\to D(\cC_2),$ the derived extension of scalars functor. Its right adjoint functor (restriction of scalars) is denoted by $\Phi_*:D(\cC_2)\to D(\cC_1).$ 
%We also denote by $\bL\Phi^*$ and $\Phi_*$ the corresponding exact functors between $D(\cC_1)$ and $D(\cC_2).$ 

We also recall from \cite[Definitions 3.6]{T} that a $\cC$-module is called quasi-representable if it is quasi-isomorphic to a representable $\cC$-module. For two DG categories $\cC,\cC',$ a $\cC\otimes\cC'$-module $M$ is called right quasi-representable if for each object $x\in\cC,$ the $\cC'$-module $M(x,-)$ is quasi-representable.

We denote by $\bR\un{\Hom}(\cC,\cC')\subset\Mod_{\cC^{op}\otimes\cC'}$ the full subcategory of right quasi-representable $\cC^{op}\otimes\cC'$-modules. By \cite[Theorem 6.1]{T}, this DG category (considered up to a quasi-equivalence) is actually the internal Hom in the homotopy category of DG categories $\Ho(\dgcat_{\mk})$ (with inverted quasi-equivalences). We have a natural quasi-functor $\Fun(\cC,\cC')\to \bR\un{Hom}(\cC,\cC'),$ where $\Fun(\cC,\cC')$ is the naive DG category of DG functors $\cC\to\cC',$ as defined in \cite{Ke}. Moreover, if $\cC$ is cofibrant, this functor is essentially surjective on the homotopy categories.

A small DG category $\cC$ is called smooth (resp. locally proper) if the diagonal $\cC\mhyphen\cC$-bimodule is perfect (resp. pseudo-perfect). Moreover, $\cC$ is called proper if it is locally proper and is Morita equivalent to a DG algebra (i.e. the triangulated category $D_{\perf}(\cC)$ has a classical generator). 

%The following (simple but useful) criterion of smoothness and local properness will be important:

%$\bullet$ A small DG category $\cC$ is smooth (resp. locally proper) if for any small DG category $\cC'$ we have an inclusion $\bR\un{\Hom}(\cC,\cC')\subset \Perf(\cC^{op}\otimes\cC')$ (resp. $\Perf(\cC^{op}\otimes\cC')\subset \bR\un{\Hom}(\cC,\cC')$).

We recall the notion of a short exact sequence of DG categories.

\begin{defi}\label{defi:short_exact_dg}A pair of functors $\cA_1\xto{F_1}\cA_2\xto{F_3}\cA_3$ is said to be a (Morita) short exact sequence of DG categories if the following conditions hold

$\rm{i)}$ the composition $F_2F_1$ is homotopic to zero;

$\rm ii)$ the functor $F_1$ is quasi-fully-faithful;

$\rm iii)$ the induced quasi-functor $\bbar{F_2}:\cA_2/F_1(\cA_1)\to \cA_3$ is a Morita equivalence.
\end{defi}

In particular, a short exact sequence of DG categories induces a long exact sequence of K-groups, where $K_{\bullet}(\cA)$ is the Waldhausen K-theory \cite{W} of the Waldhausen category of cofibrant perfect $\cA$-modules. We will in fact need only the boundary map $K_1(\cA_3)\to K_0(\cA_1).$ 

\subsection{$A_{\infty}$-algebras and $A_{\infty}$-(bi)modules}

All the definitions and constructions regarding DG categories which are invariant under quasi-equivalences can be translated into the world of $A_{\infty}$-categories. For the introduction on $A_{\infty}$-categories and algebras see \cite{L-H, Ke3, KS}.

It will be sufficient for us to work with $A_{\infty}$-algebras (that is, $A_{\infty}$-categories with a single object).
% We now recall the $A_{\infty}$-algebras and (bi)modules

In order to write down the signs in formulas it is convenient to adopt the following

{\noindent {\bf Notation.}} {\it For a collection of homogeneous elements $a_0,\dots,a_n$ of a graded vector space $A,$ and $0\leq p,q\leq n,$ we put $$l_p^q(a)=\begin{cases}|a_p|+\dots+|a_q|+q-p+1 & \text{ if }p\leq q;\\
|a_p|+\dots+|a_n|+|a_0|+\dots+|a_q|+n-p+q & \text{ if }p>q.\end{cases}.$$ If the collection starts with $a_1$ (and there is no $a_0$) we only use $l_p^q(a)$ for $1\leq p\leq q\leq n.$}

\begin{defi}A non-unital $A_{\infty}$-structure on a graded vector space $A$ is a sequence of multilinear operations $\mu_n=\mu_n^A:A^{\otimes n}\to A,$ where $\deg(\mu_n)=2-n,$ satisfying the following relations:
\begin{equation}\label{eq:A_infty_rels}\sum\limits_{i+j+k=n+1}(-1)^{l_1^i(a)}\mu_{i+k+1}(a_1,\dots,a_i,\mu_j(a_{i+1},\dots,a_{i+j}),a_{i+j+1},\dots,a_n)=0,\end{equation}
for $n\geq 0.$ Here for $1\leq p\leq q\leq n$ we put $l_p^q(a):=|a_p|+\dots+|a_q|+q-p+1.$\end{defi}

\begin{remark}In our sign convention, a non-unital DG algebra $B$ can be considered as an $A_{\infty}$-algebra, with $\mu_1(a)=-d(a),$ $\mu_2(a_1,a_2)=(-1)^{|a_1|}a_1a_2,$ and $\mu_{\geq 3}=0.$\end{remark}

\begin{defi}A non-unital $A_{\infty}$-morphism $f:A\to B$ is given by a sequence of linear maps $f_n:A^{\otimes n}\to B,$ where $\deg(f_n)=1-n,$ satisfying the following relations:
\begin{multline}\label{eq:A_infty_morphism}\sum\limits_{i_1+\dots+i_k=n}\mu_k^B(f_{i_1}(a_1,\dots,a_{i_1}),\dots,f_{i_k}(a_{i_1+\dots+i_{k-1}+1},\dots,a_n))=\\ \sum\limits_{i+j+k=n}(-1)^{l_1^i(a)}f_{i+k+1}(a_1,\dots,a_i,\mu_j^A(a_{i+1},\dots,a_{i+j}),a_{i+j+1},\dots,a_n).\end{multline}\end{defi}

Given an $A_{\infty}$-algebra $A,$ one defines the $A_{\infty}$-algebra $A^{op}$ as follows: it is equal to $A$ as a graded vector space, and we have
$$\mu_n^{A^{op}}(a_1,\dots,a_n)=(-1)^{\sigma}\mu_n^A(a_n,\dots,a_1),$$ where $\sigma=\sum\limits_{1\leq i<j\leq n}(|a_i|+1)(|a_j|+1).$

We now define the notion of an $A_{\infty}$-module.

%\begin{defi}A left $A_{\infty}$-module $M$ over an $A_{\infty}$-algebra $A$ is a graded vector space $M$ with a sequence of operations $\mu_{n}^M:A^{\otimes n-1}\otimes M\to M,$ where $n>0 0,$ $\deg(\mu_n^M)=2-n,$ and the following relations are satisfied:
%\begin{multline}\sum\limits_{i+j=n+1}(-1)^{l_1^i(a)}\mu_{i+1}^M(a_1,\dots,a_i,\mu_j^M(a_{i+1},\dots,a_n,m))+\\
%\sum\limits_{i+j+k=n+1}(-1)^{l_1^i(a)}\mu_{i+k+1}(a_1,\dots,a_i,\mu_j(a_{i+1},\dots,a_{i+j}),a_{i+j+1},\dots,a_n,m)=0.\end{multline}\end{defi}

%\begin{remark}If a graded vector space is already equipped with a differential $d,$ then an $A_{\infty}$ left $A$-module structure on $M$ (with $\mu_1^M=d$) is equivalent to an $A_{\infty}$-morphism $f:A\to\End_{\mk}(M).$ Indeed, given such an $A_{\infty}$-morphism, we put $\mu_n^M(a_1,\dots,a_{n-1},m):=f_{n-1}(a_1,\dots,a_{n-1})(m)$ for $n\geq 2$ (and vice versa).\end{remark}

\begin{defi}A right $A_{\infty}$-module $M$ over an $A_{\infty}$-algebra $A$ is a graded vector space with a sequence of operations $\mu_{n}^M:M\otimes A^{\otimes n-1}\to M,$ where $n>0,$ $\deg(\mu_n^M)=2-n,$ and the following relations are satisfied:
\begin{multline}\sum\limits_{i+j=n}\mu_{j+1}^M(\mu_{i+1}^M(m,a_1,\dots,a_i),a_{i+1},\dots,a_n)+\\
\sum\limits_{i+j+k=n+1}(-1)^{|m|+l_1^i(a)}\mu_{i+k+1}(m,a_1,\dots,a_i,\mu_j(a_{i+1},\dots,a_{i+j}),a_{i+j+1},\dots,a_n)=0.\end{multline}\end{defi}

We also need $A_{\infty}$-bimodules.

\begin{defi}Let $A$ and $B$ be non-unital $A_{\infty}$-algebras. An $A_{\infty}$ $A\mhyphen B$-bimodule $M$ is a graded vector space with a collection of operations $\mu_{i,j}=\mu_{i,j}^M:A^{\otimes i}\otimes M\otimes B^{\otimes j}\to M,$ where $i,j\geq 0,$ such that for any $n,m\geq 0$ and homogeneous $a_1,\dots,a_n\in A,$ $b_1,\dots,b_m\in B,$ $m\in M,$ the following relation is satisfied:
\begin{multline*}\sum\limits_{i+j+k=n+1}(-1)^{l_1^i(a)}\mu_{i+k+1,m}^M(a_1,\dots,\mu_j^A(a_{i+1},\dots,a_{i+j}),\dots,a_n,m,b_1,\dots,b_m)\\
+\sum\limits_{\substack{1\leq i\leq n+1;\\
0\leq j\leq m}}\mu_{i-1,m-j}^M(a_1,\dots,a_{i-1},\mu_{n+1-i,j}^M(a_i,\dots,a_n,m,b_1,\dots,b_j),b_{j+1},\dots,b_m)\\
+\sum\limits_{i+j+k=m+1}(-1)^{l_1^n(a)+l_1^i(b)+|m|}\mu_{n,i+k+1}^M(a_1,\dots,a_n,m,b_1,\dots,\mu_j^B(b_{i+1},\dots,b_{i+j}),\dots,b_m)=0.\end{multline*}
\end{defi}

\begin{remark}1) In our sign convention, a non-unital DG algebra $B$ can be considered as an $A_{\infty}$-algebra, with $\mu_1(b)=-d(b),$ $\mu_2(b_1,b_2)=(-1)^{|b_1|}b_1 b_2,$ and $\mu_{\geq 3}=0.$

2) If furthermore $M$ is a right DG $B$-module, then the $A_{\infty}$ $B$-module structure on $M$ is given by $\mu_1^M(m)=d(m),$ $\mu_2^M(m,a)=(-1)^{|m|+1}ma,$ and $\mu_{\geq 3}^M=0.$

3) If $A$ is another non-unital DG algebra, and $M$ is a DG $A\mhyphen B$-bimodule, then the $A_{\infty}$ $A\mhyphen B$-bimodule structure on $M$ is given by $\mu_{0,0}^M(m)=d(m),$ $\mu_{1,0}^M(a,m)=am,$ $\mu_{0,1}^M(m,b)=(-1)^{|m|+1}mb,$ and $\mu_{i,j}^M=0$ for $i+j\geq 2.$\end{remark}

We now recall the strict unitality.

\begin{defi}1) A non-unital $A_{\infty}$-algebra $A$ is called strictly unital if there is a (unique) element $1=1_A\in A$ such that $\mu_1(1)=0,$ $\mu_2(1,a)=a=(-1)^{|a|}\mu_2(a,1)$ for any homogeneous element $a\in A,$ and for $n\geq 3$ we have $\mu_n(a_1,\dots,a_n)=0$ if at least one of the arguments $a_i$ equals $1.$ 

2) A non-unital $A_{\infty}$-morphism $f:A\to B$ between strictly unital $A_{\infty}$-algebras is called strictly unital if $f_1(1_A)=1_B,$ and for $n\geq 2$ we have $f_n(a_1,\dots,a_n)=0$ if at least one of the arguments $a_i$ equals $1.$

3) Given a strictly unital $A_{\infty}$-algebra $A,$ an $A_{\infty}$ $A$-module $M$ is called strictly unital if $\mu_2^M(m,1)=(-1)^{|m|+1}m,$ and for $n\geq 3$ we have $\mu_n^M(m,a_1,\dots,a_{n-1})=0$ if at least one of $a_i$'s equals $1.$ 

4) Given strictly unital $A_{\infty}$-algebras $A,$ $B,$ an $A_{\infty}$ $A\mhyphen B$-bimodule is called strictly unital if $\mu_{1,0}^M(1_A,m)=m,$ $\mu_{0,1}^M(m,1_B)=(-1)^{|m|+1}m,$ and for $k+l\geq 2$ we have $\mu_{k,l}(a_1,\dots,a_k,m,b_1,\dots,b_l)=0$ if at least one of $a_i$'s equals $1_A$ or at least one of $b_j$'s equals $1_B.$
\end{defi}

From now on, all $A_{\infty}$-algebras and (bi)modules will be strictly unital

Given a strictly unital $A_{\infty}$-algebra $A,$ we define the DG category $\Mod^{\infty}\mhyphen A$  whose objects are $A_{\infty}$-modules and the morphisms are defined as follows. Given $M,N\in \Mod^{\infty}\mhyphen A,$ we put $$\Hom^{\infty}_A(M,N)^{gr}:=\prod\limits_{n\geq 0}\Hom_{\mk}(M\otimes A[1]^{\otimes n},N),$$ and the differential is given by
\begin{multline*}d(\varphi)_n(m,a_1,\dots,a_n)=\sum\limits_{i=0}^n \mu_{n-i+1}^N(\varphi_i(m,a_1,\dots,a_i),a_{i+1},\dots,a_n)\\
-\sum\limits_{i=0}^n(-1)^{|\varphi|}\varphi_{n-i+1}(\mu_{i+1}^M(m,a_1,\dots,a_i),a_{i+1},\dots,a_n)\\
-\sum\limits_{1\leq i\leq j\leq n}(-1)^{|\varphi|+|m|+l_1^{i-1}(a)}\varphi_{n+i-j-1}(m,a_1,\dots,\mu_{j-i+1}^A(a_i,\dots,a_j),\dots,a_n).\end{multline*} 
The composition is given by
$$(\varphi\psi)_n(m,a_1,\dots,a_n)=\sum\limits_{i=0}^n\varphi_{n-i}(\psi_{i}(m,a_1,\dots,a_i),a_{i+1},\dots,a_n).$$

Given a unital DG algebra $B,$ we denote by $\PsPerf(B)\subset \Mod^{\infty}\mhyphen B$ the full DG subcategory formed by pseudo-perfect DG modules. We have $[\PsPerf(B)]\simeq D_{\perf}(B).$

\begin{remark}\label{rem:what_is_bimodule}Let $A,B$ be $A_{\infty}$-algebras. 

1) An $A_{\infty}$ $A\mhyphen B$-bimodule structure on a graded vector space $M$ is equivalent to the following data:
\begin{itemize}
\item the right $A_{\infty}$ $B$-module structure on $M;$
\item the $A_{\infty}$-morphism $f:A\to \End^{\infty}_B(M).$
\end{itemize}
Namely, given an $A_{\infty}$-bimodule $M,$ the induced $B$-module structure is given by $\mu_n^M=\mu_{0,n-1}^M,$ and the $A_{\infty}$-morphism is given by $f_n(a_1,\dots,a_n)(m,b_1,\dots,b_l)=\mu_{n,l}(a_1,\dots,a_n,m,b_1,\dots,b_l).$ 

2) Also, an $A_{\infty}$-bimodule structure is equivalent to an 
\end{remark}

We finally define a technically useful notion of an an $A_{\infty}$-bimorphism of (strictly unital) $A_{\infty}$-algebras $f:(A,B)\to C.$ It is given by the linear maps $f_{r,s}:A^{\otimes r}\otimes B^{\otimes s}\to C,$ where $r,s\geq 0,$ $r+s>0,$ so that the following relations are satisfied:
\begin{multline}\sum\limits_{\substack{0=r_0\leq r_1\leq\dots\leq r_k=r;\\
0=s_0\leq s_1\leq\dots\leq s_k=s}}(-1)^{\sigma}\mu_k^C(f_{r_1,s_1}(a_1,\dots,a_{r_1};b_1,\dots,b_{s_1}),\dots,\\
f_{r-r_{k-1},s-s_{k-1}}(a_{r_{k-1}+1}\dots,a_{r};b_{s_{k-1}+1},\dots,b_s))=\\
\sum\limits_{i+j+k=r}(-1)^{l_1^i(a)}f_{i+k+1,s}(a_1,\dots,a_i,\mu_j(a_{i+1},\dots,a_{i+j}),a_{i+j+1},\dots,a_r;b_1,\dots,b_s)+\\
\sum\limits_{i+j+k=s}(-1)^{l_1^r(a)+l_1^i(b)}f_{r,i+k+1}(a_1,\dots,a_r;b_1,\dots,b_i,\mu_j(b_{i+1},\dots,b_{i+j}),b_{i+j+1},\dots,b_s),\end{multline}
where $\sigma=\sum\limits_{1\leq p<q\leq k}l_{r_{q-1}+1}^{r_q}(a)l_{s_{p-1}+1}^{s_p}(b).$ We require that $f_{1,0}(1_A)=1_C=f_{0,1}(1_B),$ and for $k+l\geq 2$ $f_{k,l}(a_1,\dots,a_k,b_1,\dots,b_l)=0$ if at least one of $a_i$'s equals $1_A,$ or at least one of $b_j$'s equals $1_B.$ 

\begin{remark}One can similarly define $A_{\infty}$ $n$-morphisms $(A_1,\dots,A_n)\to B,$ so that the category of $A_{\infty}$-algebras becomes a (non-symmetric) pseudo-monoidal category. In particular, the $A_{\infty}$-morphisms can be composed with $A_{\infty}$-morphisms in the natural way.\end{remark}

\begin{remark}\label{rem:bimodule_as_bimorphism} If a graded vector space $M$ is given a differential $d,$ then an $A_{\infty}$-bimodule structure on $M$ (with $\mu_1^M=d$) is equivalent to an $A_{\infty}$-bimorphism $f:(A,B^{op})\to\End_{\mk}(M).$ Given such an $A_{\infty}$-bimorphism, one puts $$\mu_{r,s}^M(a_1,\dots,a_r,m,b_1,\dots,b_s):=(-1)^{l}f_{r,s}(a_1,\dots,a_r,b_s,\dots,b_1)(m),$$
where $l=l_1^s(b)\cdot|m|+\sum\limits_{1\leq p<q\leq s}(|b_p|+1)(|b_q|+1).$\end{remark}

The diagonal $A_{\infty}$ $A\mhyphen A$-bimodule is given by $A$ as a graded vector space, and we have $$\mu_{i,j}(a_1,\dots,a_i,b,c_1,\dots,c_j)=(-1)^{l_1^i(a)+1}\mu_{i+j+1}^A(a_1,\dots,a_i,b,c_1,\dots,c_j).$$

Finally, we mention the gluing of $A_{\infty}$-algebras. Let $M$ be an $A_{\infty}$ $A\mhyphen B$-bimodule. We denote by 
$\begin{pmatrix}
B & 0\\
M & A
\end{pmatrix}$ the $A_{\infty}$-algebra $C$ which equals $A\oplus B\oplus M$ as a graded vector space, so that the non-zero components of $\mu_n^C$ are given by $\mu_n^A,$ $\mu_n^B,$ and
$$(-1)^{l_1^i(a)+1}\mu_{i,j}(a_1,\dots,a_i,m,b_1,\dots,b_j),\quad i+j+1=n,$$
where $a_1,\dots,a_i\in A,$ $b_1,\dots,b_j\in B.$ 

\section{Preliminaries on the Hochschild complex, pairings and copairings}

In this section all $A_{\infty}$-algebras are strictly unital. For an $A_{\infty}$-algebra $A,$ we put $\bbar{A}:=A/\mk\cdot 1_A.$

The mixed Hochschild complex (see \cite{Ke2, KS}) $(C_{\bullet}(A),b,B)$ of an $A_{\infty}$-algebra $A$ is given as a graded vector space by $$C_{\bullet}(A):=\bigoplus\limits_{n\geq 0}A\otimes (\bbar{A}[1])^{\otimes n}.$$ For convenience we write $(a_0,\dots,a_n)$ instead of $a_0\otimes\dots\otimes a_n\in C_{\bullet}(A).$

The Hochschild differential is given by \begin{multline}b(a_0,\dots,a_n)=\sum\limits_{0\leq i\leq j\leq n}(-1)^{l_0^{i-1}(a)+1}(a_0,\dots,\mu_{j-i+1}(a_i,\dots,a_j),\dots,a_n)+\\
\sum\limits_{0\leq p<q\leq n}(-1)^{l_0^{q-1}(a)l_q^n(a)+1}(\mu_{n+p+2-q}(a_q,\dots,a_n,a_0,\dots,a_p),a_{p+1},\dots,a_{q-1}).\end{multline}
The Connes-Tsygan  differential $B$ (see \cite{Co, FT, Ts}) is given by
$$B(a_0,a_1,\dots,a_n)=\sum\limits_{0\leq i\leq n}(-1)^{l_0^{i-1}(a)l_i^n(a)+1}(1,a_i,\dots,a_n,a_0,\dots,a_{i-1}).$$

The Hochschild complex can be more generally defined for $A_{\infty}$-categories, and is Morita invariant \cite{KS}. We refer to \cite{KS} for the definition of cyclic homology $HC_{\bullet},$ negative cyclic homology $HC^{-}_{\bullet}$ and $HP_{\bullet}.$ In this paper we will in fact deal only with the first differential of the Hochschild-to-cyclic spectral sequence, which is the map $B:HH_n(A)\to HH_{n+1}(A)$ induced by the Connes-Tsygan differential. 

We recall the natural pairings and co-pairings on $HH_{\bullet}(A).$ Let us restrict ourselves to DG algebras for a moment. Given a DG algebra $A,$ we have a Chern character $\ch:K_n(A)\to HH_n(A)$ (see \cite{CT}; the Chern character naturally lifts to $HC^{-}(A)$), but we will not need this).

In particular, given DG algebras $A,B$ and an object $M\in D_{\perf}(A\otimes B),$ we have a copairing $$\ch(M)\in (HH_{\bullet}(A)\otimes HH_{\bullet}(B))_0\cong HH_0(A\otimes B).$$ This copairing is used in the formulation of Conjecture \ref{conj:degeneration_for_smooth_intro} for $A=B^{op}$ being smooth, and $M=A.$

Dually \cite{S}, if we have DG algebras $A$ and $B,$ and an object $M\in D_{\pspe}(A^{op}\otimes B^{op}),$ then we have a pairing (of degree zero) $$HH_{\bullet}(A)\otimes HH_{\bullet}(B)\to HH_{\bullet}(A\otimes B)\to HH_{\bullet}(\End_{\mk}(M))\to \mk$$
(the last map is an isomorphism if and only if $M$ is not acyclic). In the formulation of Conjecture \ref{conj:degeneration_for_proper_intro} this pairing is used for $A=B^{op}$ proper, and $M=A.$ In this case we denote the pairing by $\langle \cdot,\cdot\rangle.$

We would like to obtain an explicit formula for the pairing in the $A_{\infty}$-setting. The reader who is not interested in (or is already familiar with) the details can skip to Corollary \ref{cor:corollary_on_m_3} which is essentially all we need.

Let $A,B,C$ be $A_{\infty}$-algebras. Suppose that we are given an$A_{\infty}$-bimorphism $f:(A,B)\to C.$ We would like to define an explicit map of complexes $$f_*:C_{\bullet}(A)\otimes C_{\bullet}(B)\to C_{\bullet}(C).$$ It is given by
\begin{multline}\label{eq:map_on HH_induced by_bimorphism}f_*((a_0,\dots,a_n)\otimes (b_0,\dots,b_m))=\\
\sum\limits_{\substack{0\leq i_0\leq \dots\leq i_k\leq n;\\0\leq j_0\leq\dots\leq j_k\leq m;\\
0\leq p< q\leq k}}(-1)^{\veps(i_0,\dots,i_k,j_1,\dots,j_k,p,q)}(\mu_{k+p+2-q}(f_{i_{q+1}-i_q,j_q-j_{q-1}}(a_{i_q+1},\dots,a_{i_{q+1}},b_{j_{q-1}+1},\dots,b_{j_q}),\\ \dots,
 f_{i_{p+1}-i_p,j_p-j_{p-1}}(a_{i_p+1},\dots,a_{i_{p+1}},b_{j_{p-1}+1},\dots,b_{j_p})),\\
 f_{i_{p+2}-i_{p+1},j_{p+1}-j_p}(a_{i_{p+1}+1},\dots,a_{i_{p+2}},b_{j_p+1},\dots,b_{j_{p+1}}),\dots,\\
 f_{i_q-i_{q-1},j_{q-1}-j_{q-2}}(a_{i_{q-1}+1},\dots,a_{i_q},b_{j_{q-2}+1},\dots,b_{j_{q-1}})),\end{multline}
where \begin{multline*}\veps(i_0,\dots,i_k,j_1,\dots,j_k,p,q)=l_0^m(a)+l_{i_q+1}^{n}(a)l_{0}^{i_q}(a)+l_{j_{q-1}+1}^{m}(b)l_{0}^{j_{q-1}}(b)+1+\\
\sum\limits_{s=1}^k l_{i_{q+s}+1}^{i_{q+s+1}}(a)l_{j_{q-1}+1}^{j_{q+s-1}}(b).\end{multline*}
 
In this summation we mean that $i_{s+k+1}=i_s,$ $j_{s+k+1}=j_s,$ $a_{s+n+1}=a_s,$ $b_{s+m+1}=b_s.$ Also, we require that for all $s=1,\dots,k-1$ we have $(i_{s+1}-i_s)+(j_s-j_{s-1})>0,$ so that we don't get the (non-existing) $f_{0,0}$ anywhere.

\begin{remark}Suppose that we are in the special situation when $A,$ $B$ and $C$ are DG algebras, and the $A_{\infty}$-bimorphism $f$ has only two non-zero components $f_{1,0}$ and $f_{0,1}.$ This is equivalent to a DG algebra morphism $A\otimes B\to C,$ which we still denote by $f.$

The map given by \eqref{eq:map_on HH_induced by_bimorphism} is obtained by composing the map $C_{\bullet}(A\otimes B)\to C_{\bullet}(C)$ with the Eilenberg-Zilber map $EZ:C_{\bullet}(A)\otimes C_{\bullet}(B)\to C_{\bullet}(A\otimes B).$
\end{remark}

\begin{prop}\label{prop:explicit_pairing_via_str}Let $A_1$ and $A_2$ be strictly unital $A_{\infty}$-algebras, and $M$ a finite dimensional strictly unital $A_{\infty}$ $A_1\mhyphen A_2$-bimodule (we require that $\dim\oplus_{n}\dim(M^n)<\infty$). Then the composition map $$\psi:HH_{\bullet}(A_1)\otimes HH_{\bullet}(A_2^{op})\xto{\id\otimes B}HH_{\bullet}(A_1)\otimes HH_{\bullet}(A_2^{op})\to HH_{\bullet}(\End(V))\to \mk$$ is given by the following explicit formula:
\begin{multline*}\psi((a_0,\dots,a_n)\otimes (b_0,\dots,b_m))=\str_M(m\mapsto\\
\mapsto (-1)^{l_0^m(b)\cdot|m|}\sum\limits_{\substack{0\leq i\leq n;\\ 0\leq j\leq m}}(-1)^{\sigma_{i,j}}\mu_{n+1,m+1}(a_i,\dots,a_k,\dots,a_{i-1},m,b_j,\dots,b_0,b_l,\dots,b_{j+1}),\end{multline*}
where $$\sigma_{i,j}=l_0^n(a)+l_0^{i-1}(a)l_i^n(a)+\sum\limits_{0\leq p<q\leq j}(|b_p|+1)(|b_q|+1)+\sum\limits_{j+1\leq p<q\leq m}(|b_p|+1)(|b_q|+1).$$\end{prop}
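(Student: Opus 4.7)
The plan is to compute the composition defining $\psi$ explicitly at the chain level, using the bimorphism picture of an $A_{\infty}$-bimodule.

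First, by Remark \ref{rem:bimodule_as_bimorphism}, the $A_1\mhyphen A_2$-bimodule structure on $M$ is equivalent to an $A_{\infty}$-bimorphism $f\colon (A_1, A_2^{op})\to \End_\mk(M)$ whose components $f_{r,s}$ are related to $\mu_{r,s}^M$ by reversing the order of the $b$-arguments and inserting the sign from that remark. Under this identification, the middle arrow in the composition defining $\psi$ is, by construction, the chain-level map $f_*\colon C_\bullet(A_1)\otimes C_\bullet(A_2^{op})\to C_\bullet(\End_\mk(M))$ given by formula \eqref{eq:map_on HH_induced by_bimorphism}.

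Second, since $M$ is finite-dimensional and $\End_\mk(M)$ is an ordinary DG algebra (so $\mu_n^{\End(M)}=0$ for $n\ge 3$), the supertrace $\tau\colon C_\bullet(\End_\mk(M))\to\mk$ induced by Morita invariance is non-zero only on length-zero chains, where it is $\varphi_0\mapsto\str_M(\varphi_0)$; one checks directly via the cyclic property of $\str_M$ that this extension-by-zero is a chain map. Inspecting \eqref{eq:map_on HH_induced by_bimorphism}, the output chain has length $q-p-1$, so only $q=p+1$ contributes; moreover $\mu_{k+p+2-q}^{\End(M)}=\mu_{k+1}^{\End(M)}$ must be non-zero, so $k\in\{0,1\}$. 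The case $k=0$ admits no pair $p<q\le 0$, leaving only $k=1,\ p=0,\ q=1$, which produces a single $\mu_2^{\End_\mk(M)}$ applied to two $f$-factors.

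Third, the Connes--Tsygan differential $B(b_0,\dots,b_m)=\sum_{j}\pm(1,b_j,b_{j+1},\dots,b_{j-1})$ places a $1_{A_2^{op}}$ at the head of each resulting chain. By strict unitality of $f$, this $1$ must appear as the unique argument of some $f_{0,1}$ (giving $\id_M$) and cannot occur in any $f_{r,s}$ with $r+s\ge 2$. Combined with the pairing of $(a,b)$-intervals in \eqref{eq:map_on HH_induced by_bimorphism}, this forces one of the two $f$-factors in the surviving $k=1$ term to be $f_{0,1}(1)=\id_M$ (the one pairing an empty $a$-interval with the singleton $\{1\}$ $b$-interval), and the other to be $f_{n+1,m+1}$ evaluated on all $n+1$ of the $a$'s and all $m+1$ of the original $b$'s, cyclically shifted by indices $i$ and $j$. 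Since $\mu_2^{\End_\mk(M)}(\varphi,\id_M)=\pm\varphi$, taking $\str_M$ and converting $f_{n+1,m+1}$ back to $\mu_{n+1,m+1}^M$ via Remark \ref{rem:bimodule_as_bimorphism} (which reverses the order of the $b$-arguments and introduces the prefactor $(-1)^{l_0^m(b)\cdot|m|}$) yields the stated formula.

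The main obstacle is the sign bookkeeping: one must combine the sign $\veps$ from \eqref{eq:map_on HH_induced by_bimorphism}, the sign of $B$, the sign from the $A_{\infty}$-opposite structure on $A_2^{op}$, the Koszul sign from $\mu_2^{\End(M)}(\varphi,\id_M)$, and the sign from Remark \ref{rem:bimodule_as_bimorphism} (including the reversal sign $\sum_{p<q}(|b_p|+1)(|b_q|+1)$, which splits at the cyclic cut-point into the two sums appearing in the statement), and verify that these collapse to the exponent $\sigma_{i,j}$. This is tedious but mechanical, and is the only delicate step of the argument.
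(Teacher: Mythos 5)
Your proposal is correct and takes essentially the same route as the paper's own proof, which likewise composes the chain-level supertrace map (nonzero only on length-zero, degree-zero chains of $C_{\bullet}(\End_{\mk}(M))$) with the map $f_*$ of \eqref{eq:map_on HH_induced by_bimorphism} for the bimorphism of Remark \ref{rem:bimodule_as_bimorphism}, using strict unitality to eliminate all but the surviving term. Your explicit term-by-term analysis (only $k=1$, $p=0$, $q=1$ contributes, with the unit inserted by $B$ forced into $f_{0,1}(1)=\id_M$) merely spells out what the paper leaves implicit, and deferring the collapse of signs to $\sigma_{i,j}$ as a mechanical verification is consistent with the paper's own level of detail.
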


\begin{proof}Recall that for a finite-dimensional complex $V$ the natural map $HH_{\bullet}(\End_{\mk}(V))\to\mk$ (which is an isomorphism if and only if $M$ is not acyclic) is given by the following morphism of complexes $C_{\bullet}(\End_{\mk}(V))\to \mk:$
$$(a_0,\dots,a_k)\mapsto \begin{cases}\str_M(a_0) & \text{ for }k=0,\,|a_0|=0;\\
0 & \text{otherwise.}\end{cases}.$$

The result follows by applying the formula \eqref{eq:map_on HH_induced by_bimorphism} and Remark \ref{rem:bimodule_as_bimorphism} (and taking the strict unitality into account).\end{proof}

Finally, we mention one particular corollary which we need in this paper.

\begin{cor}\label{cor:corollary_on_m_3}Let $A$ be a finite-dimensional non-unital $A_{\infty}$-algebra, and $a,b\in A$ are closed homogeneous elements such that $|a|+|b|=1.$ If we consider $a$ and $b$ as classes in $HH_{\bullet}(A)$ and $HH_{\bullet}(A^{op})$ respectively. Then $$\langle a,B(b)\rangle=(-1)^{|a|+1}\str_A(v\mapsto (-1)^{(|b|+1)\cdot|v|}\mu_3(a,v,b)).$$\end{cor}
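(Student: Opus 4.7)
The plan is to apply Proposition~\ref{prop:explicit_pairing_via_str} in its minimal non-trivial configuration: take $A_1 = A_2 = A$ and let $M = A$ be the diagonal $A\mhyphen A$-bimodule (which is finite-dimensional by hypothesis). First I would handle the non-unitality by formally adjoining a strict unit, forming $A^+ = A \oplus \mk\cdot 1$, and reinterpret $(a)$ and $(b)$ as Hochschild chains in $C_\bullet(A^+)$ and $C_\bullet((A^+)^{op})$ respectively. Since neither $a$ nor $b$ equals $1_{A^+}$, and since $\mu_3^{A^+}$ applied to inputs in $A$ agrees with $\mu_3^A$, this augmentation leaves the eventual formula unchanged. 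The closedness of $a$ and $b$ is precisely what is needed for the length-one chains $(a)$ and $(b)$ to be cycles in their respective Hochschild complexes.

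The key observation is that plugging $n = m = 0$ into Proposition~\ref{prop:explicit_pairing_via_str} causes the double sum over $(i,j)$ to collapse to the single term $i = j = 0$, and the argument of $\mu_{n+1,m+1}$ reduces to $(a, v, b)$. One then unpacks the combinatorics: $l_0^0(a) = |a|+1$ and $l_0^0(b) = |b|+1$; the factor $l_0^{-1}(a)$ vanishes as an empty sum; and both of the quadratic $b$-sums in the definition of $\sigma_{i,j}$ are over empty index ranges, so $\sigma_{0,0} = |a|+1$. The explicit description of the diagonal bimodule (displayed just before the gluing construction) gives
\[\mu_{1,1}(a, v, b) \;=\; (-1)^{l_1^1(a)+1}\,\mu_3^A(a, v, b) \;=\; (-1)^{|a|}\,\mu_3(a, v, b).\]

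Substituting these into the strace formula of Proposition~\ref{prop:explicit_pairing_via_str} and collecting the accumulated sign contributions $(-1)^{(|b|+1)|v|} \cdot (-1)^{\sigma_{0,0}} \cdot (-1)^{|a|}$ modulo~$2$ yields the asserted identity. The only substantive step is this sign bookkeeping; no further idea beyond specialization of the already-proved proposition is needed, so the ``main obstacle'' is simply to track each sign carefully through the reduction and to verify that the degree constraint $|a|+|b| = 1$ (which makes $\mu_3(a,v,b)$ an endomorphism of $A$ of degree $|v|$, as required for $\str_A$ to be defined) is exactly the one under which the formula produces a well-defined scalar.
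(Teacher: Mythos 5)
Your proposal takes essentially the same route as the paper: the paper's entire proof of this corollary is the single sentence ``This follows immediately from Proposition~\ref{prop:explicit_pairing_via_str}'', i.e.\ exactly your specialization to $n=m=0$ with $M=A$ the diagonal bimodule, and your preliminary unitalization $A^+=A\oplus\mk\cdot 1$ (with the observation that $\mu_3^{A^+}$ restricted to $A^{\otimes 3}$ is $\mu_3^A$ and that the extra diagonal entry at $1$ vanishes by strict unitality) is the implicit content of applying a strictly-unital statement to a non-unital algebra. One small caution on the only substantive step, the sign bookkeeping: with the paper's stated conventions your three listed factors multiply to $(-1)^{\sigma_{0,0}}\cdot(-1)^{|a|}=(-1)^{|a|+1}\cdot(-1)^{|a|}=-1$, which agrees with the asserted prefactor $(-1)^{|a|+1}$ only when $|a|$ is even, so the claim that the collection ``yields the asserted identity'' does not follow from the factors as you wrote them; given the visible typos in the displayed formula of Proposition~\ref{prop:explicit_pairing_via_str}, the residual factor $(-1)^{|a|}$ is most likely absorbed by a sign slip there rather than a flaw in your strategy (and it is invisible in the paper's application, where $|a|=|x|=0$), but you should recheck that step rather than assert it.
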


\begin{proof}This follows immediately from Proposition \ref{prop:explicit_pairing_via_str}.\end{proof}

\section{A counterexample to the generalized degeneration conjecture}
\label{sec:disproving_very_general}

We recall the main conjecture of \cite{E}.

\begin{conj}\label{conj:very_general}\cite[Conjecture 1.3 for $n=0$]{E} Let $\cB$ and $\cC$ be small DG categories over a field $\mk$ of characteristic zero. Then the composition map \begin{equation}\label{eq:map_phi_0}\varphi_0:K_0(\cB\otimes\cC)\xto{\ch} (HH_{\bullet}(\cB)\otimes HH_{\bullet}(\cC))_0\xto{\id\otimes\delta} (HH_{\bullet}(\cB)\otimes HC^-_{\bullet}(\cC))_1\end{equation} is zero.\end{conj}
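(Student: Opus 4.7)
My plan is to produce an explicit counterexample to Conjecture \ref{conj:very_general}, by reducing it to a concrete finite-dimensional statement about $A_\infty$-algebras and then constructing an $A_\infty$-algebra that violates this reformulation.

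First, I reformulate the vanishing of $\varphi_0$ via pairings. Composing the output of $\id \otimes \delta$ with the natural map $\id \otimes (HC^-_\bullet(\cC) \to HH_\bullet(\cC))$ replaces $\delta$ by Connes' operator $B$, so Conjecture \ref{conj:very_general} implies that the composition
\[
K_0(\cB \otimes \cC) \xrightarrow{\ch} (HH_\bullet(\cB) \otimes HH_\bullet(\cC))_0 \xrightarrow{\id \otimes B} (HH_\bullet(\cB) \otimes HH_\bullet(\cC))_1
\]
vanishes. For a finite-dimensional strictly unital $A_\infty$-algebra $A$ (with a DG enhancement), take $\cB = A^{op}$ and $\cC = A$, and consider the diagonal bimodule class $[A] \in K_0(\cB \otimes \cC)$. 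Pair the displayed composition against classes $a \in HH_\bullet(A)$ and $b \in HH_\bullet(A^{op})$ of total degree $-1$ via the Shklyarov pairing on the proper algebra $A$. The result is precisely the number $\langle a, B(b) \rangle$ from Section 2, and by Corollary \ref{cor:corollary_on_m_3} it equals, up to sign, the supertrace of $v \mapsto \mu_3(a, v, b)$. Thus, to disprove the conjecture it suffices to exhibit a single finite-dimensional strictly unital $A_\infty$-algebra $A$ with closed homogeneous elements $a, b$ of degrees summing to $1$ such that $\str_A\bigl(v \mapsto \mu_3(a, v, b)\bigr) \neq 0$.

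Second, such an $A$ must be built carefully. The underlying graded algebra $(A, \mu_2)$ cannot be semisimple, since in that case any $\mu_3$ compatible with $\mu_2$ would produce a Hochschild coboundary whose diagonal supertrace vanishes. A natural candidate is a small nilpotent graded quiver algebra whose Hochschild cohomology supports a trace-nonvanishing $3$-cocycle. According to the abstract a total dimension of about $10$ should suffice; the graded structure and quiver are chosen so that two distinguished elements $a, b$ of total degree $1$ detect the relevant class.

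The main obstacle is the full $A_\infty$-coherence: with $\mu_2$ and a candidate $\mu_3$ fixed, one must extend to higher operations $\mu_4, \mu_5, \ldots$ satisfying all the $A_\infty$-relations. I will carry out this extension by induction, where the obstruction to defining $\mu_n$ lies in a specific Hochschild cohomology group of $(A, \mu_2)$. For a well-chosen small algebra these obstructions can be arranged to vanish, and I expect the process to terminate with only finitely many nonzero higher operations, so that the verification reduces to a finite combinatorial computation. Once $A$, $a$, $b$ are in place, the reduction above unwinds to give $\varphi_0([A]) \neq 0$, disproving Conjecture \ref{conj:very_general}.
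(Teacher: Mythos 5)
Your reduction in the first step does not work, and it fails for a structural reason. To evaluate $\varphi_0$ on the diagonal bimodule you need a class $[A]\in K_0(\cB\otimes\cC)=K_0(A^{op}\otimes A)$, i.e.\ the diagonal bimodule must be \emph{perfect} over $A^{op}\otimes A$ --- which is precisely the definition of $A$ being smooth. But the algebra you intend to build (finite-dimensional, non-semisimple, with $\str_A(v\mapsto\mu_3(a,v,b))\ne 0$) is proper and cannot be smooth: if it were smooth and proper, Theorem \ref{th:Kaledin_degen} would force the composite you are testing to vanish. So the scheme is self-defeating: for your $A$ the diagonal is only pseudo-perfect and defines no class in $K_0$, while for any $A$ where $[A]$ is legitimate, Kaledin's theorem kills the obstruction. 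Relatedly, the step ``pair the displayed composition against classes $a,b$'' does not typecheck: $(\id\otimes B)(\ch([M]))$ is an \emph{element} of $(HH_\bullet\otimes HH_\bullet)_1$, not a functional on pairs of classes, and collapsing it under Shklyarov's pairing does not produce $\langle a,B(b)\rangle$. What your supertrace criterion (via Corollary \ref{cor:corollary_on_m_3}) genuinely disproves is Conjecture \ref{conj:degeneration_for_proper_intro} --- the \emph{pairing} statement for proper algebras, where the diagonal enters only as a pseudo-perfect module --- and that is exactly Section \ref{sec:disproving_for_proper} of the paper, which is logically independent of Conjecture \ref{conj:very_general}; the paper nowhere derives one from the other.

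The paper's actual counterexample to Conjecture \ref{conj:very_general} is much more elementary and avoids $A_\infty$-constructions entirely: take $\cB=\Lambda_1=\mk\langle\xi\rangle/\xi^2$ with $|\xi|=1$, $\cC=\mk[\veps]$, and the class $[\cO_D]\in K_0(\Lambda_1\otimes\mk[\veps])$ of the Cartier divisor $D=\{x+\veps=0\}\subset\A^1\times T$, which lies in $\Perf(\Lambda_1\otimes\mk[\veps])$ because $x+\veps$ is invertible on $\bG_m\times T$. Since $[\cO_D]$ is the image of the unit $x+\veps$ under the boundary map $K_1(\bG_m\times T)\to K_0(\Lambda_1\otimes\mk[\veps])$, one computes $\ch([\cO_D])=\frac{d_{dR}x}{x}\otimes 1-\frac{d_{dR}x}{x^2}\otimes\veps+\frac{1}{x}\otimes d_{dR}\veps$, whence $(\id\otimes B)(\ch([\cO_D]))=-\frac{d_{dR}x}{x^2}\otimes d_{dR}\veps\ne 0$. (Your preliminary move of replacing $\delta$ by the Connes operator $B$ via $HC^-_\bullet\to HH_\bullet$ does match the paper's first step.) Finally, even judged as an attack on Conjecture \ref{conj:degeneration_for_proper_intro}, your construction half is only a hope: ``obstructions can be arranged to vanish'' is exactly the hard content, and the paper secures it through Theorem \ref{th:nilpotence_and_factorization}(2) (factorization of a square-zero class through $\mk[x]/x^6$), the identification of $H^{\bullet}$ of the completed free algebra with $\Ext^{\bullet}_{\mk[y]/y^3}(\mk,\mk)$, and a $\bG_m$-equivariant obstruction argument resting on the explicit computation of Lemma \ref{lem:computattion_of_HH_cohom}, whose weight-zero parts vanish --- none of which is supplied, or replaceable by generic smallness heuristics, in your sketch.
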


In this section we construct a counterexample to Conjecture \ref{conj:very_general}. We put $\Lambda_1=\mk\langle\xi\rangle/\xi^2,$ where $|\xi|=1,$ and (automatically) $d\xi=0.$ We have a quasi-equivalence $\Perf(\Lambda_1)\simeq\Perf_{\{0\}}(\A_{\mk}^1)$ (the free $\Lambda_1$-module of rank $1$ corresponds to the skyscraper sheaf $\cO_{0}$). In particular, we have a short exact sequence
\begin{equation}\label{eq:ex_sec_on_A^1}0\to\Perf(\Lambda_1)\to \Perf(\A^1)\to\Perf(\bG_m)\to 0\end{equation}

 We also denote by $\mk[\veps]:=\mk[t]/t^2$ the algebra of dual numbers ($|\veps|=0,$ $d\veps=0$). Let us denote by $x$ the coordinate on $\A^1,$ and put $T:=\Spec(\mk[\veps]).$ Tensoring \eqref{eq:ex_sec_on_A^1} by $\mk[\epsilon]$ (and taking perfect complexes), we obtain another short exact sequence:
\begin{equation}\label{eq:ex_sec_times_T}0\to \Perf(\Lambda_1\otimes \mk[\epsilon])\to \Perf(\A^1\times T)\to \Perf(\bG_m\times T)\to 0.\end{equation}

Now let us take the Cartier divisor $D:=\{x+\veps=0\}\subset \A^1\times T.$ This is well-defined since $x+\veps$ is not a zero divisor in $\mk[x]\otimes\mk[\veps].$ Moreover, we have $D\cap (\bG_m\times T)=\emptyset,$ since $x+\veps$ is invertible in $\mk[x^{\pm 1}]\otimes\mk[\veps]:$ we have $(x+\veps)(x^{-1}-x^{-2}\veps)=1.$ Therefore, by \eqref{eq:ex_sec_times_T}, we may and will consider $\cO_D$ as an object of $\Perf(\Lambda_1\otimes\mk[\veps]).$

\begin{theo}\label{th:disproving_very_general}Conjecture \ref{conj:very_general} does not hold for the DG algebras $\Lambda_1$ and $\mk[\veps].$ Namely, we have $\varphi_0([\cO_D])\ne 0,$ where $\varphi_0$ is defined in \eqref{eq:map_phi_0}.\end{theo}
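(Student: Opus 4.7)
\smallskip
\noindent\textbf{Proof plan.}

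The plan is to represent $[\cO_D]$ as the image of an explicit $K_1$-class under the boundary map of \eqref{eq:ex_sec_times_T}, then transport the computation of $\varphi_0([\cO_D])$ to a computation on the \emph{outer} DG category $\Perf(\bG_m\times T)$, where the Chern character of a unit is easy to write down. More precisely, the two-term Koszul resolution $[\mathcal O_{\A^1\times T}\xrightarrow{x+\veps}\mathcal O_{\A^1\times T}]$ is a presentation of $\cO_D$ in $\Perf(\A^1\times T)$ whose differential becomes an isomorphism after passing to $\Perf(\bG_m\times T)$ (we already saw $x+\veps$ is a unit there). By the standard description of the boundary map in Waldhausen $K$-theory, this means
\[ [\cO_D]\;=\;\partial_K[x+\veps]\;\in\;K_0(\Lambda_1\otimes\mk[\veps]), \]
where $\partial_K\colon K_1(\bG_m\times T)\to K_0(\Lambda_1\otimes\mk[\veps])$ is the connecting map of \eqref{eq:ex_sec_times_T} and $[x+\veps]\in K_1(\mk[x^{\pm 1}]\otimes\mk[\veps])$ is the class of the unit $x+\veps$.

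Next I would use the naturality of the refined Chern character $\ch\colon K_\bullet\to HC^-_\bullet$ and of $HH_\bullet$ with respect to short exact sequences of DG categories: the sequence \eqref{eq:ex_sec_times_T} produces long exact sequences in $HH_\bullet$ and $HC^-_\bullet$ which, tensored with the corresponding invariants of $\cB=\Lambda_1$, are compatible with the ingredients of $\varphi_0$. The net effect is a commutative diagram giving
\[ \varphi_0\bigl([\cO_D]\bigr)\;=\;\widetilde\partial\bigl(\varphi_1^{\mathrm{out}}([x+\veps])\bigr), \]
where $\varphi_1^{\mathrm{out}}$ is the analogue of $\varphi_0$ shifted by one and evaluated on $\Perf(\bG_m\times T)$, and $\widetilde\partial$ is the induced connecting map in the tensored long exact sequence. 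On $\bG_m\times T$ the refined Chern character of a unit is classical: $\ch(u)=\tfrac{du}{u}\in HC^-_1$. With $u=x+\veps$ and $(x+\veps)^{-1}=x^{-1}-x^{-2}\veps$, one expands
\[ \tfrac{d(x+\veps)}{x+\veps}\;=\;(x^{-1}-x^{-2}\veps)(dx+d\veps) \]
and uses the K\"unneth decomposition $HH_\bullet(\mk[x^{\pm 1}]\otimes\mk[\veps])\cong HH_\bullet(\mk[x^{\pm 1}])\otimes HH_\bullet(\mk[\veps])$ (and its $HC^-$ analogue) to isolate pieces on which $\id\otimes B$ is non-zero --- these are exactly the terms that pair the $\mk[x^{\pm 1}]$-part $\tfrac{dx}{x^2}$ (whose residue at $0$ is what $\widetilde\partial$ picks out) with a $\mk[\veps]$-part on which the Connes differential is nontrivial, i.e.\ with $\veps$ rather than with $d\veps$.

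Combining these two observations, after the boundary $\widetilde\partial$ transports the residue to the class $[\xi]$ in $HH_1(\Lambda_1)$ (via the Koszul-duality identification $\Perf(\Lambda_1)\simeq \Perf_{\{0\}}(\A^1)$), I expect the final answer to be a non-zero scalar multiple of
\[ [\xi]\otimes B[\veps]\;\in\;\bigl(HH_\bullet(\Lambda_1)\otimes HC^-_\bullet(\mk[\veps])\bigr)_1. \]
Non-vanishing follows from the explicit computations: $HH_1(\Lambda_1)$ contains $[\xi]$ as a nonzero class, and $B[\veps]=[d\veps]$ is the standard nonzero degree-$1$ class in $HC^-_\bullet(\mk[\veps])$ coming from the truncation relation $\veps^2=0$.

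\smallskip
\noindent\textbf{Main obstacle.} The heart of the argument is the second step: verifying that the chain-level Chern characters, the Connes differential $\delta$, and the connecting maps of \eqref{eq:ex_sec_times_T} really do assemble into a commutative diagram of the required form, so that $\varphi_0([\cO_D])$ is genuinely determined by the computation on $\bG_m\times T$. Equivalently, one must show that no other K\"unneth contribution cancels the surviving term after $\widetilde\partial$. This is a careful sign and model-dependence issue rather than a deep conceptual one, but it is what will require the most detailed bookkeeping --- in particular, one needs to fix compatible DG/$A_\infty$-models for the Koszul equivalence $\Perf_{\{0\}}(\A^1)\simeq\Perf(\Lambda_1)$ and track the resulting signs through the bar/Hochschild complexes.
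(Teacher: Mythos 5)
Your overall route coincides with the paper's: represent $[\cO_D]$ as the image of the unit class $[x+\veps]\in K_1(\bG_m\times T)$ under the connecting map of \eqref{eq:ex_sec_times_T}, compute $\ch(x+\veps)=d_{dR}\log(x+\veps)=\frac{d_{dR}x}{x}-\frac{\veps\, d_{dR}x}{x^2}+\frac{d_{dR}\veps}{x}$, push through the boundary, and observe that the term pairing $\frac{d_{dR}x}{x^2}$ with $\veps$ survives $\id\otimes B$. However, your final step contains a genuine error. The connecting map $HH_1(\bG_m\times T)\to HH_0(\Lambda_1\otimes\mk[\veps])$ is \emph{not} a residue map: $HH_0(\Lambda_1)\cong \mk[x^{\pm 1}]d_{dR}x/\mk[x]d_{dR}x$ is infinite-dimensional, and the boundary is the quotient projection, so \emph{all} principal parts $d_{dR}x/x^n$, $n\geq 1$, survive. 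If the boundary really ``picked out the residue at $0$,'' your own surviving term would die, since the residue of $d_{dR}x/x^2$ vanishes --- so your description is internally inconsistent with your conclusion. Relatedly, the nonzero class one actually ends with is $-\frac{d_{dR}x}{x^2}\otimes d_{dR}\veps$, with first factor in $HH_0(\Lambda_1)$; it is not a multiple of $[\xi]\otimes B[\veps]$. Under $\Perf(\Lambda_1)\simeq\Perf_{\{0\}}(\A^1)$ the class of $\xi$ corresponds to $x^{-1}\in HH_{-1}(\Lambda_1)$, and $HH_\bullet(\Lambda_1)$ is concentrated in degrees $-1$ and $0$ (so $HH_1(\Lambda_1)=0$); in particular your proposed class $[\xi]\otimes B[\veps]$ has total degree $0$, not $1$. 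The correct computation of $HH_\bullet(\Lambda_1)$ via the localization sequence \eqref{eq:ex_sec_on_A^1}, as in the paper, must replace your residue/Koszul-duality shortcut.

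Two further points where the paper is simpler than your plan. The ``main obstacle'' you flag --- chain-level compatibility of the refined Chern character, $\delta$, and the connecting maps with values in $HC^-_\bullet$ --- is sidestepped entirely: since the Connes operator $B:HH_\bullet\to HH_{\bullet+1}$ is the composition of $\delta$ with the canonical projection $HC^-_\bullet\to HH_\bullet$, it suffices to prove the stronger statement $(\id\otimes B)(\ch([\cO_D]))\neq 0$, which requires only functoriality of $HH_\bullet$ and of $\ch$ with respect to \eqref{eq:ex_sec_times_T} together with the K\"unneth isomorphism; no $HC^-$-level diagram chase or chain-level sign bookkeeping is needed. And your worry about cancelling K\"unneth contributions is vacuous: the only other term after applying $\id\otimes B$, namely $\frac{1}{x}\otimes B(d_{dR}\veps)$, lies in the distinct summand $HH_{-1}(\Lambda_1)\otimes HH_2(\mk[\veps])$ (and in fact $B(d_{dR}\veps)=0$ in the normalized complex), so nothing can cancel the component $-\frac{d_{dR}x}{x^2}\otimes d_{dR}\veps\in HH_0(\Lambda_1)\otimes HH_1(\mk[\veps])$.
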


\begin{proof}We will prove a stronger statement: $\psi_0([\cO_{D}])\ne 0,$ where $\psi_0$ is the composition $$K_0(\Lambda_1\otimes\mk[\veps])\xto{\ch}(HH_{\bullet}(\Lambda_1)\otimes HH_{\bullet}(\mk[\veps]))_0\xto{\id\otimes B} (HH_{\bullet}(\Lambda_1)\otimes HH_{\bullet}(\mk[\veps]))_1.$$
%In other words, $\psi_0=(\id\otimes B)\circ \ch.$

We use the notation $d_{dR}$ for the de Rham differential in order to avoid confusion with differentials in DG algebras.

First let us identify the Hochschild homology of $\Lambda_1.$ Applying the long exact sequence in Hochschild homology to \eqref{eq:ex_sec_on_A^1}, we see that $$HH_{-1}(\Lambda_1)=\mk[x^{\pm 1}]/\mk[x],\text{ and }HH_0(\Lambda_1)=\mk[x^{\pm 1}]d_{dR}x/\mk[x]d_{dR}x,$$ and $HH_i(\Lambda_1)=0$ for $i\not\in\{-1,0\}.$

Further, for any commutative $\mk$-algebra $R$ we have $HH_0(R)=R,$ and $HH_1(R)=\Omega^1_{R/\mk},$ (and the Connes differential $B:HH_0(R)\to HH_1(R)$ is given by the de Rham differential). In particular, we have $HH_0(\mk[\veps])=\mk[\veps],$ and $HH_1(\mk[\veps])=\mk\cdot d_{dR}\veps$ (and we do not need $HH_{\geq 2}(\mk[\veps])$ for our considerations).

{\noindent{\bf Claim.}} {\it Within the above notation, we have $\ch(\cO_D)=\frac{d_{dR}x}{x}\otimes 1-\frac{d_{dR}x}{x^2}\otimes \veps+\frac{1}{x}\otimes d_{dR}\veps.$}

\begin{proof}As we already mentioned, the function $x+\veps$ is invertible on $\bG_m\times T,$ hence it gives an element $\alpha\in K_1(\bG_m\times T).$ Moreover, the boundary map $$K_1(\bG_m\times T)\to K_0(\Lambda_1\otimes\mk[\veps])$$ sends $\alpha$ to $[\cO_D].$ We have $\ch(\alpha)=d_{dR}\log(x+\veps)\in\Omega^1_{\bG_m\times T}=HH_1(\bG_m\times T).$ Explicitly, we have
$$d_{dR}\log(x+\veps)=(x^{-1}-x^{-2}\veps)d_{dR}(x+\veps)=\frac{d_{dR}x}{x}-\frac{\veps d_{dR}x}{x^2}+\frac{d_{dR}\veps}{x}.$$
Applying the boundary map $HH_1(\bG_m\times T)\to HH_0(\Lambda_1\otimes \mk[\veps]),$ we obtain the desired formula for $\ch(\cO_D).$\end{proof}

It follows from Claim that $$(\id\otimes B)(\ch([\cO_D]))=-\frac{d_{dR}x}{x^2}\otimes d_{dR}\veps\ne 0.$$ This proves the theorem.
\end{proof}

\section{A counterexample to Conjecture \ref{conj:degeneration_for_smooth_intro}}
\label{sec:disproving_version_for_smooth}

In this section we disprove Conjecture \ref{conj:degeneration_for_smooth_intro}.
%\begin{conj}\label{conj:degeneration_for_smooth_intro}Let $A$ be a smooth DG algebra. Then the composition $$K_0(A\otimes A^{op})\xto{\ch} (HH_{\bullet}(A)\otimes HH_{\bullet}(A^{op})_0\xto{\id\otimes\delta} (HH_{\bullet}(A)\otimes HC^-_{\bullet}(A^{op}))_1$$ vanishes on the class $[A]$ of the diagonal bimodule.\end{conj}

\begin{prop}\label{prop:smooth_comp_implies_smooth}Let $B$ be a smooth DG algebra and $F:\Perf(A)\to \Perf(B)$ a localization functor, where $A$ is a smooth and proper DG algebra. Then Conjecture \ref{conj:degeneration_for_proper_intro} holds for $B.$\end{prop}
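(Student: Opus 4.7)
The plan is to transfer Kaledin's Hochschild-to-cyclic degeneration from the smooth and proper $A$ to $B$ via the localization $F$, using naturality of the Hochschild--cyclic formalism together with Shklyarov's pairing, so that the composition in Conjecture \ref{conj:degeneration_for_proper_intro} must vanish on $B$. I would begin by invoking Keller's localization theorem for DG categories: $F$ fits into a short exact sequence $\Perf(\mathcal{S}) \to \Perf(A) \xrightarrow{F} \Perf(B)$ with $\mathcal{S} := \ker F$, inducing long exact sequences in both Hochschild and cyclic homology (and likewise for the opposite algebras), compatible with the Connes differential and the boundary map $\delta$ in the SBI sequence.

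Next, Kaledin's Theorem \ref{th:Kaledin_degen} applied to $A^{op}$ (smooth and proper, since $A$ is) yields $HP_\bullet(A^{op}) = HH_\bullet(A^{op})((u))$, equivalently the degeneration at $E_1$ of the Hochschild-to-cyclic spectral sequence, and hence the vanishing of $\delta_A \colon HC_\bullet(A^{op}) \to HH_{\bullet+1}(A^{op})$. By naturality under $F_*^{op}$, every class $\beta \in HC_\bullet(B^{op})$ that lifts to some $\tilde\beta \in HC_\bullet(A^{op})$ satisfies $\delta_B(\beta) = F_*^{op}(\delta_A(\tilde\beta)) = 0$, so the composition in Conjecture \ref{conj:degeneration_for_proper_intro} vanishes on such lifted classes. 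For the remaining classes arising from the connecting map $HC_{\bullet-1}(\mathcal{S}^{op}) \to HC_\bullet(B^{op})$ of the Keller triangle, I would use functoriality of Shklyarov's pairing with respect to $F$ together with the fact that $F$ kills $\mathcal{S}$ to conclude that $\langle \alpha, \delta_B(\beta)\rangle_B$ still vanishes for every $\alpha \in HH_\bullet(B)$; concretely, one represents the pairing on $B$ via the trace on the diagonal bimodule of $B$ and shows that contributions coming from $\mathcal{S}^{op}$ pair trivially because the bimodule they detect becomes zero in $\Perf(B \otimes B^{op})$.

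The main obstacle is exactly this last compatibility step: unlike pushforward on Hochschild homology, which is straightforwardly functorial, one must verify that Shklyarov's pairing on $B$ pulls back in a controlled way through $F$ to the pairing on $A$, and that the connecting contributions from $\mathcal{S}^{op}$ do not spoil the vanishing. This requires a careful analysis of the Keller triangle at the level of the mixed Hochschild complex and of its interaction with the trace structure underlying the proper pairing --- essentially, constructing a chain-level model of $\langle -,-\rangle_B$ that factors through the chain-level model on $A$ modulo $\mathcal{S}$, so that Kaledin's vanishing on $A$ propagates cleanly to every class in $HC_\bullet(B^{op})$, regardless of whether it lifts.
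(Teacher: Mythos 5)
You are aiming at the wrong statement. The citation of Conjecture \ref{conj:degeneration_for_proper_intro} in the proposition is a typo in the paper: here $B$ is only assumed smooth, not proper, so the Shklyarov pairing $HH_{\bullet}(B)\otimes HH_{\bullet}(B^{op})\to\mk$ on which Conjecture \ref{conj:degeneration_for_proper_intro} is built is not even defined (it requires the diagonal bimodule of $B$ to be pseudo-perfect, i.e.\ $B$ proper). What the proposition actually asserts --- as the paper's own proof makes explicit, and as its uses require (the reduction of part 2) to part 1) in Theorem \ref{th:disproving_for_smooth}, and the corollary on generators of $D^b_{coh}(X)$) --- is Conjecture \ref{conj:degeneration_for_smooth_intro} for $B$: the vanishing of $(\id\otimes\delta)\,\ch([I_B])$, where $I_B$ is the diagonal bimodule and $\delta$ lands in negative cyclic homology. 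The proper-side transfer that your proposal resembles is the separate Proposition \ref{prop:cat_res_implies_prop}, which works with a quasi-fully-faithful embedding into a smooth proper category, not with a localization. Your second paragraph, built around a pairing $\langle -,-\rangle_B$ and around arbitrary classes of $HC_{\bullet}(B^{op})$, is therefore directed at a statement that does not parse in this generality, and this mis-aim is exactly what produces the obstacle you admit you cannot overcome.

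Once the correct target is identified, the difficulty you wrestle with evaporates, but only thanks to an idea absent from your proposal. Conjecture \ref{conj:degeneration_for_smooth_intro} demands vanishing on a \emph{single} class, the Chern character of the diagonal, and the crux of the paper's proof is that for a localization $F$ one has $(F\otimes F^{op})^*(I_A)=I_B$, so the induced map $(HH_{\bullet}(A)\otimes HH_{\bullet}(A^{op}))_0\to (HH_{\bullet}(B)\otimes HH_{\bullet}(B^{op}))_0$ carries $\ch([I_A])$ to $\ch([I_B])$. Naturality of $\id\otimes\delta$ together with Theorem \ref{th:Kaledin_degen} applied to the smooth and proper $A$ then gives $(\id\otimes\delta)\ch([I_B])=0$ in three lines; no Keller localization sequence and no chain-level model of any pairing is needed. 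By contrast, the part of your plan handling classes of $HC_{\bullet}(B^{op})$ that do not lift from $A^{op}$ cannot be repaired: the kernel $\cS$ of a localization is in general neither smooth nor proper, so Kaledin's theorem says nothing about it, and the corresponding ``all classes'' strengthening is precisely the kind of statement (Conjecture \ref{conj:very_general}) that Section \ref{sec:disproving_very_general} of this paper disproves. Your first-paragraph mechanism --- lift a class, use naturality, apply degeneration upstairs --- is the right skeleton; what is missing is the observation that the only class you must lift is the diagonal one, and that the localization hypothesis lifts it for free.
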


\begin{proof}This is actually explained in \cite[proof of Theorem 4.6]{E}. We explain the argument for completeness. The localization assumption implies that $(F\otimes F^{op})^*(I_A)=I_B.$ In particular, the map $HH_{\bullet}(A)\otimes HH_{\bullet}(A^{op})\to HH_{\bullet}(B)\otimes HH_{\bullet}(B^{op})$ takes $\ch(I_A)$ to $\ch(I_B).$ It remains to apply the commutative diagram
$$\begin{CD}HH_{\bullet}(A)\otimes HH_{\bullet}(A^{op})@>\id\otimes\delta >> HH_{\bullet}(A)\otimes HC^{-}_{\bullet}(A^{op})[-1]\\
@VVV @VVV\\
HH_{\bullet}(B)\otimes HH_{\bullet}(B^{op})@>\id\otimes\delta >> HH_{\bullet}(B)\otimes HC^{-}_{\bullet}(B^{op})[1],\end{CD}$$
and Theorem \ref{th:Kaledin_degen} applied to $A.$
\end{proof}

We have the following corollary, mentioned in the introduction.

\begin{cor}\label{cor:conj_for_smooth_holds_for_D^b_coh} Let $X$ be a separated scheme of finite type over $\mk,$ and $\cG\in D^b_{coh}(X)$ -- a generator. Then Conjecture \ref{conj:degeneration_for_smooth_intro} holds for the smooth DG algebra $A=\bR\End(\cG).$\end{cor}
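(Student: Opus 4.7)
The plan is to deduce Corollary~\ref{cor:conj_for_smooth_holds_for_D^b_coh} directly from Proposition~\ref{prop:smooth_comp_implies_smooth} (applied in its smooth-DG-algebra incarnation, as per the argument via $(F\otimes F^{op})^{*}(I_A) = I_B$ given in its proof), together with the existence of a smooth categorical compactification of $D^{b}_{coh}(X)$, which is exactly the output of (the weak version of) \cite{E2}.

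First I would unpack the setup: since $\cG$ is a generator of $D^{b}_{coh}(X)$, the derived endomorphism DG algebra $A = \bR\End(\cG)$ gives a Morita equivalence $\Perf(A)\simeq D^{b}_{coh}(X)$. The corollary claims $A$ is smooth, so I would recall why: for $X$ separated and of finite type, $D^{b}_{coh}(X)$ carries a compact generator whose endomorphism DG algebra is smooth (this is where the assumption on $X$, not just the existence of $\cG$, is being used). This reduction places us in the setting where, in order to verify Conjecture~\ref{conj:degeneration_for_smooth_intro} for $A$, it suffices to exhibit a localization $F:\Perf(A')\twoheadrightarrow\Perf(A)$ from a smooth and proper DG algebra $A'$.

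Next I would invoke the geometric construction underlying \cite{E2}: choose a compactification $X\hookrightarrow\bbar{X}$ of $X$ as a separated scheme of finite type, and then a resolution of singularities (together with the requisite blow-ups at infinity) producing a smooth projective variety $Y$ equipped with a proper morphism whose associated pullback/pushforward provides a DG functor $\Perf(Y)\to D^{b}_{coh}(X)\simeq\Perf(A)$ that is a quotient functor in the sense of Definition~\ref{defi:short_exact_dg}. Setting $A'$ to be a DG algebra Morita equivalent to $\Perf(Y)$ (e.g.\ the derived endomorphism algebra of a tilting-type generator), this yields exactly the localization hypothesis of Proposition~\ref{prop:smooth_comp_implies_smooth}.

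Finally, Proposition~\ref{prop:smooth_comp_implies_smooth} applied to $F:\Perf(A')\to\Perf(A)$ transports $\ch([A'])$ to $\ch([A])$ under the map induced by $F\otimes F^{op}$, and the naturality square for $\id\otimes\delta$ combined with Kaledin's degeneration theorem (Theorem~\ref{th:Kaledin_degen}) applied to the smooth and proper DG algebra $A'$ forces $\varphi_{0}([A]) = 0$, which is the content of Conjecture~\ref{conj:degeneration_for_smooth_intro} for $A$. The main obstacle here is entirely the external input in the second paragraph, namely that $D^{b}_{coh}(X)$ admits a smooth categorical compactification — this is the nontrivial content of \cite{E2}, and everything else is a formal transport of structure along a localization.
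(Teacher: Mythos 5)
Your proposal is correct and follows essentially the same route as the paper: invoke \cite[Theorem 1.8 1)]{E2} to produce a localization $\Perf(Y)\to D^b_{coh}(X)\simeq\Perf(A)$ with $Y$ smooth projective, and then apply Proposition~\ref{prop:smooth_comp_implies_smooth}. The only (cosmetic) difference is that you appeal to Kaledin's Theorem~\ref{th:Kaledin_degen} for the smooth and proper algebra $A'$, whereas the paper notes that since $A'$ is of geometric origin, the classical Hodge-to-de Rham degeneration for $Y$ already suffices.
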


\begin{proof}Indeed, by \cite[Theorem 1.8 1)]{E2}, there is a localization functor of the form $D^b_{coh}(Y)\to D^b_{coh}(X),$ where $Y$ is a smooth projective algebraic variety over $\mk.$ The result follows by Proposition \ref{prop:smooth_comp_implies_smooth}. Note that here we don't even need to apply Theorem \ref{th:Kaledin_degen} since we only use the classical Hodge-to-de Rham degeneration for $Y.$\end{proof}

\begin{remark}In fact, in the formulation of Proposition \ref{prop:smooth_comp_implies_smooth} we could weaken the assumption on the functor $F$ to be a localization, requiring it only to be a homological epimorphism, which means that the functor $D(A)\to D(B)$ a localization, see \cite[Section 3]{E2}. Then in the proof of Corollary \ref{cor:conj_for_smooth_holds_for_D^b_coh} we can apply the corresponding weakened version of \cite[Theorem 1.8 1)]{E2} which is much easier to prove.\end{remark}

Clearly, Conjecture \ref{conj:degeneration_for_smooth_intro} is a special case of Conjecture \ref{conj:very_general}. On the other hand, it was proved in \cite{E} that Conjectures \ref{conj:very_general} and \ref{conj:degeneration_for_smooth_intro} are actually equivalent (more precisely, this follows from the proof of \cite[Theorem 4.6]{E}). However, deducing an explicit counterexample to Conjecture \ref{conj:degeneration_for_smooth_intro} along the lines of \cite{E}  would require some computations, which we wish to avoid. Instead, we use some trick.

Let us take some elliptic curve $E$ over $\mk,$ with a $\mk$-rational point $p\in E(\mk).$ Choosing a local parameter $x\in\cO_{E,p},$ we get an identification $\Perf(\Lambda_1)\simeq\Perf_{\{p\}}(E)\subset \Perf(E).$ Let us choose some generator $\cF\in\Perf(E)$ (e.g. $\cF=\cO_E\oplus\cO_p$), and put $B_E=\bR\End(\cF),$ so that $\Perf(B_E)\simeq\Perf(E).$ We denote by $F:\Perf(\Lambda_1)\hto \Perf(B_E)$ the resulting embedding.

Further, we denote by $C$ the semi-free DG algebra $\mk\langle t_1,t_2\rangle,$ with $|t_1|=0,$ $|t_2|=-1,$ $dt_1=0,$ and $dt_2=t_1^2.$

We take the object $M\in\Perf(\Lambda_1\otimes C\otimes C)$ whose image in $\Perf(\mk[x]\otimes C\otimes C)$ is given by $$Cone(\mk[x]\otimes C^{\otimes 2}\xto{x\otimes 1^{\otimes 2}+1\otimes t_1^{\otimes 2}}\mk[x]\otimes C^{\otimes 2}).$$ As in the previous section, we see that $M$ is well-defined since the element $$x\otimes 1^{\otimes 2}+1\otimes t_1^{\otimes 2}\in H^0(\mk[x^{\pm 1}]\otimes C\otimes C)=\mk[x^{\pm 1}]\otimes\mk[\veps]\otimes\mk[\veps]$$ is invertible.

Finally, we put $N:=(F\otimes\id_C^{\otimes 2})^*(M)\in \Perf(B_E\otimes C\otimes C).$

\begin{theo}\label{th:disproving_for_smooth} 1) Within the above notation, the dg algebra $$A:=\begin{pmatrix}
B_E\otimes C & 0\\
N & C^{op}
\end{pmatrix}$$ is homotopically finitely presented (hence smooth), but it does not satisfy Conjecture \ref{conj:degeneration_for_smooth_intro}.

2) The DG category $\Perf(A)$ gives a negative answer to Question \ref{ques:Toen}.\end{theo}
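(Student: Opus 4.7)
The proof splits into three parts: (a) $A$ is hfp (hence smooth), (b) $\varphi_0([A])\neq 0$ so Conjecture~\ref{conj:degeneration_for_smooth_intro} fails for $A$, and (c) $\Perf(A)$ has no smooth categorical compactification.

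For (a), $B_E=\bR\End(\cF)$ is smooth and proper because $E$ is smooth projective and $\cF$ is a perfect generator, while $C$ is hfp since it is semi-free on two generators in bounded degrees. Tensor products preserve hfp, and $N\in\Perf(B_E\otimes C\otimes C)$ since $M$ is a two-term cone of perfect modules and pushforward along the quasi-fully-faithful $F\otimes\id_C^{\otimes 2}$ preserves perfection. Gluing two hfp DG algebras along a perfect bimodule is hfp by a standard pushout argument in the Tabuada model structure, and every hfp DG algebra is smooth by To\"en--Vaqui\'e.

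For (b), the idempotents $e_1=1_{B_E\otimes C}$ and $e_2=1_{C^{op}}$ yield a semi-orthogonal decomposition $\Perf(A)=\langle\Perf(B_E\otimes C),\Perf(C^{op})\rangle$, giving compatible splittings of $HH_\bullet(A)$, $HH_\bullet(A^{op})$ and $HC^-_\bullet(A^{op})$. The diagonal decomposes in $K_0(A\otimes A^{op})$ as $[A]=[B_E\otimes C]+[C^{op}]+[N]$, and the three summands land in pairwise disjoint corners of $HH_\bullet(A)\otimes HC^-_\bullet(A^{op})$ after applying $(\id\otimes\delta)\circ\ch$, with $[N]$ in the corner $HH_\bullet(C^{op})\otimes HC^-_\bullet((B_E\otimes C)^{op})$. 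It thus suffices to show the $[N]$-contribution is nonzero. Choosing $\cF=\cO_E\oplus\cO_p$ makes $\Perf(\Lambda_1)\simeq\Perf_{\{p\}}(E)$ a right-admissible subcategory of $\Perf(B_E)$, so $F_\ast$ is injective on Hochschild homology, and the question reduces to the analogous computation for $[M]\in K_0(\Lambda_1\otimes C\otimes C)$. This computation is a mild elaboration of Theorem~\ref{th:disproving_very_general}, with one copy of $C$ playing the role of $\mk[\veps]$: the image under $\id\otimes B$ contains a summand directly analogous to $-\frac{d_{dR}x}{x^2}\otimes d_{dR}\veps$, which is nonzero. The main obstacle is the bookkeeping: assembling the K\"unneth/boundary identities for $HC^-$ compatibly with the semi-orthogonal splitting, and checking via Kaledin's theorem (applied to the smooth-proper factor $B_E$) that no cancelling contribution arises from the diagonal pieces $[B_E\otimes C]$ and $[C^{op}]$.

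For (c), a smooth categorical compactification $\cB\simeq\cA/\cS$ would yield a localization $\Perf(\cA)\to\Perf(A)$ with $\cA$ smooth and proper, and then Proposition~\ref{prop:smooth_comp_implies_smooth} would imply Conjecture~\ref{conj:degeneration_for_smooth_intro} for $A$, contradicting (b). Since $A$ is hfp by (a), $\Perf(A)$ is an hfp DG category and therefore a valid test case, delivering the negative answer to Question~\ref{ques:Toen}.
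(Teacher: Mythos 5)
Your parts (a) and (c) match the paper: homotopy finiteness of the glued algebra is exactly \cite[Proposition 5.15]{E2}, smoothness of hfp algebras is To\"en--Vaqui\'e, and part 2) reduces to part 1) via Proposition \ref{prop:smooth_comp_implies_smooth}, just as in the paper. The genuine gap is in (b), at the step where you claim that $\Perf_{\{p\}}(E)\simeq\Perf(\Lambda_1)$ is a right-admissible subcategory of $\Perf(E)$ and hence that $F_*$ is injective on Hochschild homology. Both claims are false: $HH_0(\Lambda_1)\cong \mk[x^{\pm 1}]d_{dR}x/\mk[x]d_{dR}x$ is infinite-dimensional while $HH_0(E)\cong\mk\oplus\mk$ is two-dimensional, so $HH_F$ has an enormous kernel (the paper computes $HH_F(d_{dR}x/x^n)=0$ for all $n>1$). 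This kills precisely the class you propose to detect: the ``direct analogue'' of $-\frac{d_{dR}x}{x^2}\otimes d_{dR}\veps$ from Theorem \ref{th:disproving_very_general} dies after pushing forward to $E$, so your reduction to $[M]\in K_0(\Lambda_1\otimes C\otimes C)$ proves nothing about $N$. This non-injectivity is the whole reason for the paper's trick: $E$ is taken to be an elliptic curve so that the class $x^{-1}\in HH_{-1}(\Lambda_1)$ has \emph{nonzero} image $[x^{-1}]\in H^1(\cO_E)$ (no rational function on $E$ has a single simple pole at $p$), and $M$ is built over \emph{two} copies of $C$ via $x\otimes 1^{\otimes 2}+1\otimes t_1^{\otimes 2}$ so that, after pushing forward along $F$ and along $\pi:C\to H^0(C)\cong\mk[\veps]$, one gets $\ch(\bar N)=(0,1)\otimes 1^{\otimes 2}+[x^{-1}]\otimes d_{dR}\veps\otimes\veps+[x^{-1}]\otimes\veps\otimes d_{dR}\veps$, whose image under $\id\otimes\id\otimes B$ is the nonzero term $[x^{-1}]\otimes d_{dR}\veps\otimes d_{dR}\veps$. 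Note the direction of the functoriality argument: one pushes the class forward and computes its image explicitly; no injectivity is ever invoked or available.

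Two secondary slips. First, your corner is transposed: since $N=e_2Ae_1$ carries a left $C^{op}$- and right $(B_E\otimes C)$-action, $\ch(N)$ lies in the corner $HH_{\bullet}(B_E\otimes C)\otimes HH_{\bullet}(C)$ of $HH_{\bullet}(A)\otimes HH_{\bullet}(A^{op}),$ so $\delta$ (equivalently $B$, after projecting $HC^-_{\bullet}\to HH_{\bullet}$) is applied to the lone factor $HH_{\bullet}(C)$ coming from the $C^{op}$-vertex --- not to $HC^-_{\bullet}((B_E\otimes C)^{op})$. In the corner you name, the component of $\ch([A])$ is identically zero, since it corresponds to the vanishing matrix entry $e_1Ae_2=0$; computing there would yield nothing. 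Second, the appeal to Kaledin's theorem to rule out cancellation from the diagonal pieces is unnecessary: the four corners are direct summands of $(HH_{\bullet}(A)\otimes HC^-_{\bullet}(A^{op}))_1$, and the classes $\ch(I_{B_E\otimes C})$ and $\ch(I_{C^{op}})$ land in different summands from $\ch(N)$, so nonvanishing of the $N$-component alone disproves Conjecture \ref{conj:degeneration_for_smooth_intro} for $A$ regardless of the other contributions.
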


%\begin{prop}\label{prop:implication_for_localization} Let $\cA$ be a smooth DG category, and $q:\cA\to \cA'$ is a localization functor (hence $\cA'$ is also smooth). If Conjecture \ref{conj:degeneration_for_smooth_intro} holds for $\cA,$ then it holds for $\cA'.$\end{prop}

%\begin{proof}Since $q$ is a localization, it is also a homological epimorphism, i.e. $(q\otimes q^{op})^*(I_{\cA})=I_{\cA'}.$ Further, by functoriality of Chern character the following diagram commutes:
%$$\begin{CD}K_0(\cA\otimes\cA^{op}) @>\varphi_0>> (HH_{\bullet}(\cA)\otimes HC^-_{\bullet}(\cA^{op}))_1\\
%@VVV @VVV\\
%K_0(\cA'\otimes\cA^{'op}) @>\varphi_0>> (HH_{\bullet}(\cA')\otimes HC^-_{\bullet}(\cA^{'op}))_1.
%\end{CD}$$
%Thus, the conjecture holds for $\cA'.$\end{proof}

%\begin{cor}\label{cor:compactification_implies_conj}Let $\cB$ be a smooth DG category that admits a categorical smooth compactification. Then Conjecture \ref{conj:very_general} holds for $\cB.$\end{cor}

%\begin{proof}This follows from Proposition \ref{prop:implication_for_localization} and Kaledin's theorem.\end{proof}

\begin{proof}First, by Proposition \ref{prop:smooth_comp_implies_smooth} we see that 2) reduces to 1).

We now prove 1). The homotopy finiteness of $A$ follows from \cite[Proposition 5.15]{E2} (gluing of homotopically finite DG algebras by a perfect bimodule is again homotopically finite). 

The functor $F:\Perf(\Lambda_1)\to \Perf(B_E)\simeq\Perf(E)$ induces a map $HH_F$ in Hochschild homology. We need the following values of $HH_F.$ First, the morphism $HH_F:HH_0(\Lambda_1)\to HH_0(E)=H^0(\cO_E)\oplus H^1(\omega_E)\cong\mk\oplus\mk$ is given by $$\frac{d_{dR}x}{x^n}\mapsto \begin{cases}(0,1) & \text{ for }n=1;\\
0 & \text{ for }n>1.\end{cases}.$$
Further, the morphism $HH_F:HH_{-1}(\Lambda_1)\to HH_{-1}(E)=H^1(\cO_E)$ does not vanish on $x^{-1}$ (because there is no rational function on $E$ having single simple pole at $p$). We denote the image $HH_F(x^{-1})$ by $[x^{-1}].$

To prove 1), it suffices to show that $(\id\otimes\id\otimes B)(\ch(N))\in (HH_{\bullet}(\Lambda_1)\otimes HH_{\bullet}(C)^{\otimes 2})_{1}$ is non-zero. We have a natural projection $\pi:C\to H^0(C)\cong \mk[\veps].$ Let us put $\bar{N}:=(\id\otimes\pi^*\otimes\pi^*)(N)\in\Perf(E\times T\times T).$ Then $\bar{N}$ is naturally isomorphic to $\cO_{D'},$ where $D'\subset E\times T\times T$ is a Cartier divisor, set-theoretically contained in $\{p\}\times T\times T,$ and given locally by the equation $x\otimes 1^{\otimes 2}+1\otimes \veps^{\otimes 2}=0.$ The computation from Section \ref{sec:disproving_very_general} implies that $$\ch(\bar{N})=(0,1)\otimes 1^{\otimes 2}+[x^{-1}]\otimes d_{dR}\veps\otimes\veps+[x^{-1}]\otimes \veps\otimes d_{dR}\veps.$$ Therefore, we obtain $$(\id\otimes \id\otimes B)(\ch(\bar{N}))=[x^{-1}]\otimes d_{dR}\veps\otimes d_{dR}\veps\ne 0.$$ By functoriality, this implies $(\id\otimes \id\otimes B)(\ch(N))\ne 0.$ This proves 1).
\end{proof}

\section{A counterexample to Conjecture \ref{conj:degeneration_for_proper_intro}}
\label{sec:disproving_for_proper}

In this section we disprove Conjecture \ref{conj:degeneration_for_proper_intro}.

%Let $A$ be a proper DG algebra. Recall that we have a pairing $HH_{\bullet}(A)\otimes HH_{\bullet}(A^{op})\to\mk,$ which is induced by the homomorphism $A\otimes A^{op}\to\End_{\mk}(A)$ (the structure map for the diagonal bimodule).

%\begin{conj}\label{conj:degeneration_for_proper}Let $A$ be a proper DG algebra. Then the composition map \begin{equation}\label{eq:composition_for_proper}(HH_{\bullet}(A)\otimes HC_{\bullet}(A^{op}))[-1]\to HH_{\bullet}(A)\otimes HH_{\bullet}(A^{op})\to\mk\end{equation} is zero.\end{conj}

%We will moreover disprove the following special case.

%\begin{prop}\label{prop:str_of_m_3_as_special_case}Let $B$ be a strictly unital minimal finite-dimensional $A_{\infty}$-algebra. If Conjecture \ref{conj:degeneration_for_proper} holds for $B,$ then for any two homogeneous elements $a,b\in B$ with $|a|+|b|=1$ and $|a|$ odd, we have $$\str_B(x\mapsto \mu_3(a,x,b))=0.$$\end{prop}

%\begin{proof}Note that we have natural maps $B\to HH_{\bullet}(B)\to HC_{\bullet}(B).$ Thus, we can consider $a$ as an element of $HH_{-|a|}(B)_{-|a|},$ and $b$ as an element of $HC_{-|b|}(B^{op})_{-|b|}.$ 

%It is proved in Corollary \ref{cor:corollary_on_m_3} below that the composition \eqref{eq:composition_for_proper} applied to $a\otimes b$ is exactly $\str_B(x\mapsto \mu_3(a,x,b)).$ This proves the proposition.\end{proof}

More precisely, we will construct an example of a minimal finite-dimensional $A_{\infty}$-algebra $B$ and two elements $a,b\in B,$ such that $|a|+|b|=1,$ and
% as in Proposition \ref{prop:str_of_m_3_as_special_case}, such that 
$$\str_B(v\mapsto (-1^{(|b|+1)|v|})\mu_3(a,v,b))\ne 0,$$ thus disproving Conjecture \ref{conj:degeneration_for_proper_intro} (by Corollary \ref{cor:corollary_on_m_3}). 

We first mention the following observation, which in fact motivates Conjecture \ref{conj:degeneration_for_proper_intro}.

\begin{prop}\label{prop:cat_res_implies_prop}Let $B$ be a proper DG algebra and $\Perf(B)\hto \Perf(A)$ a quasi-fully-faithful functor, where $A$ is a smooth and proper DG algebra. Then Conjecture \ref{conj:degeneration_for_proper_intro} holds for $B.$\end{prop}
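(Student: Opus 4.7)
The plan is to mirror the argument of Proposition~\ref{prop:smooth_comp_implies_smooth}, but dually: replace the naturality of the Chern character of the diagonal under a localization by the naturality of the Shklyarov pairing under a quasi-fully-faithful embedding. Write $i:\Perf(B)\hookrightarrow\Perf(A)$ for the given functor and let $i_*$ denote the induced morphisms on Hochschild, cyclic, and negative cyclic homology of $B$, $B^{op}$, $A$, $A^{op}$.

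First I would record the naturality of the Shklyarov pairing. Since $i$ is quasi-fully-faithful, so is $i\otimes i^{op}$, and consequently the pullback of the diagonal $A$-bimodule $I_A$ along $i\otimes i^{op}$ is quasi-isomorphic to the diagonal $B$-bimodule $I_B$. Unpacking Shklyarov's definition --- which factors as $HH_\bullet(\cC)\otimes HH_\bullet(\cC^{op})\to HH_\bullet(\cC\otimes\cC^{op})\to\mk$, with the second arrow given by evaluation at the (pseudo-perfect, by properness) diagonal --- this identification yields at once $\langle x,y\rangle_B=\langle i_*x,\,i_*^{op}y\rangle_A$ for all $x\in HH_\bullet(B)$, $y\in HH_\bullet(B^{op})$. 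The second ingredient is the functoriality of the Connes differential: $i_*$ and $i_*^{op}$ are induced by morphisms of mixed complexes and thus commute with $\delta$.

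These naturalities assemble into the commutative diagram
$$\begin{CD}
HH_\bullet(B)\otimes HC_\bullet(B^{op})[1] @>\id\otimes\delta>> HH_\bullet(B)\otimes HH_\bullet(B^{op}) @>\langle\cdot,\cdot\rangle_B>> \mk \\
@Vi_*\otimes i_*^{op}VV @Vi_*\otimes i_*^{op}VV @| \\
HH_\bullet(A)\otimes HC_\bullet(A^{op})[1] @>\id\otimes\delta>> HH_\bullet(A)\otimes HH_\bullet(A^{op}) @>\langle\cdot,\cdot\rangle_A>> \mk.
\end{CD}$$
The top composition --- which we must show vanishes --- therefore equals the bottom composition precomposed with $i_*\otimes i_*^{op}$. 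The bottom composition is zero because $A$ is smooth and proper: this is the special case of Conjecture~\ref{conj:degeneration_for_proper_intro} for smooth and proper DG algebras, a direct consequence of Theorem~\ref{th:Kaledin_degen}, since degeneration of the Hochschild-to-cyclic spectral sequence forces the image of $\delta:HC_\bullet(A^{op})[1]\to HH_\bullet(A^{op})$ to be annihilated by the Shklyarov pairing with $HH_\bullet(A)$.

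The hard part will be the naturality identification $\langle-,-\rangle_B=\langle i_*(-),\,i_*^{op}(-)\rangle_A$, which rests on the clean fact $(i\otimes i^{op})^*I_A\simeq I_B$ together with the fact that the evaluation map $HH_\bullet(A\otimes A^{op})\to \mk$ at the $A$-diagonal restricts to the corresponding map at the $B$-diagonal; the remainder is formal functoriality of the Connes complex and the standard deduction from Kaledin's theorem already invoked in Proposition~\ref{prop:smooth_comp_implies_smooth}.
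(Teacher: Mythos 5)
Your proposal is correct and is essentially the paper's own proof: the paper deduces the proposition from exactly the commutative diagram you display (naturality of the Connes map $\delta$ and of the Shklyarov pairing under the quasi-fully-faithful embedding, the latter resting on $(i\otimes i^{op})^*I_A\simeq I_B$) together with Theorem \ref{th:Kaledin_degen} applied to $A$. You merely spell out the naturality of the pairing, which the paper leaves implicit.
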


\begin{proof}Indeed this follows from the commutative diagram
$$\begin{CD}HH_{\bullet}(B)\otimes HC_{\bullet}(B^{op})[1] @>{\id\otimes\delta}>> HH_{\bullet}(B)\otimes HH_{\bullet}(B^{op})@>>> \mk;\\
@VVV @VVV @V\id VV\\
HH_{\bullet}(A)\otimes HC_{\bullet}(A^{op})[1] @>{\id\otimes\delta}>> HH_{\bullet}(A)\otimes HH_{\bullet}(A^{op})@>>> \mk\end{CD}$$
and Theorem \ref{th:Kaledin_degen} applied to $A.$
\end{proof}

We have the following corollary, mentioned in the introduction.

\begin{cor}\label{cor:conj_for_smooth_holds_for_D^b_coh} Let $X$ be a separated scheme of finite type over $\mk,$ and $Z\subset X$ a closed proper subscheme. For any object $\cF\in \Perf_Z(X),$ Conjecture \ref{conj:degeneration_for_proper_intro} holds for the proper DG algebra $B=\bR\End(\cF).$\end{cor}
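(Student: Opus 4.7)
The plan is to reduce to Proposition \ref{prop:cat_res_implies_prop} by exhibiting a quasi-fully-faithful embedding of $\Perf(B)$ into $\Perf(A)$ for some smooth and proper DG algebra $A$. Since $B=\bR\End(\cF)$, Morita theory gives $\Perf(B)\simeq\langle\cF\rangle\subset\Perf_Z(X)$, so it suffices to construct a smooth and proper DG category $\cA$ together with a quasi-fully-faithful embedding $\Perf_Z(X)\hto\cA$ and then take $A$ to be $\bR\End(\cG)$ for a classical generator $\cG$ of $\cA$.

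To produce $\cA$, I would invoke the categorical resolution of singularities constructed by Kuznetsov--Lunts \cite{KL}. Their result provides, for the separated finite-type scheme $X$, a smooth DG enhancement $\wt{\cD}$ of a categorical resolution of $D^b_{coh}(X)$ together with a fully faithful functor $\Perf(X)\hto\wt{\cD}$ realised geometrically via a proper birational map $\pi\colon \wt X\to X$ with $\wt X$ smooth (after possibly gluing several charts). Restricting support along $\pi$, one gets an induced fully faithful embedding $\Perf_Z(X)\hto\Perf_{\pi^{-1}(Z)}(\wt X)$, where $\pi^{-1}(Z)$ is proper because $\pi$ is proper and $Z$ is proper by hypothesis. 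Thus the problem reduces to the case where the ambient scheme is smooth and the support of $\cF$ is proper.

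Once $\wt X$ is smooth, I would choose a smooth and proper compactification $\wt X\hto\bar Y$ (Nagata plus Hironaka, which is available over a characteristic zero field, and is what forces the use of \cite{KL} in the first place). Pseudo-perfect complexes on $\wt X$ with proper support extend by zero to perfect complexes on $\bar Y$: this gives a fully faithful embedding $\Perf_{\pi^{-1}(Z)}(\wt X)\hto\Perf(\bar Y)$. Taking $A=\bR\End(\cG)$ for a classical generator $\cG\in\Perf(\bar Y)\simeq D^b_{coh}(\bar Y)$, we obtain a smooth and proper DG algebra $A$ together with the required quasi-fully-faithful embedding $\Perf(B)\hto\Perf(A)$; then Proposition \ref{prop:cat_res_implies_prop} (which itself reduces to Kaledin's Theorem \ref{th:Kaledin_degen} applied to $A$) delivers the conclusion.

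The main obstacle is the first step, invoking \cite{KL} correctly: one needs that the categorical resolution is genuinely a smooth DG enhancement (not merely a smooth triangulated structure) and that the embedding $\Perf_Z(X)\hto\Perf_{\pi^{-1}(Z)}(\wt X)$ is compatible with the Morita-theoretic framework used in Proposition \ref{prop:cat_res_implies_prop}. Everything else (extension-by-zero, choice of compactification, passage to endomorphism DG algebras of generators) is standard and purely formal, so the content of the corollary is entirely absorbed by the existence of the Kuznetsov--Lunts categorical resolution combined with Kaledin's degeneration theorem for the smooth and proper DG algebra $A$.
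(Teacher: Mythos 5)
Your overall strategy---embed $\Perf(B)\simeq\langle\cF\rangle$ quasi-fully-faithfully into a smooth and proper DG category and invoke Proposition \ref{prop:cat_res_implies_prop}---is exactly the paper's, but your middle step contains a genuine error. You read the Kuznetsov--Lunts categorical resolution as being realised by a proper birational morphism $\pi\colon\wt{X}\to X$ with $\wt{X}$ smooth and $\pi^*$ fully faithful, yielding an embedding $\Perf_Z(X)\hto\Perf_{\pi^{-1}(Z)}(\wt{X})$. Full faithfulness of $\pi^*$ on perfect complexes requires $\cO_X\xto{\sim}\bR\pi_*\cO_{\wt{X}}$, i.e.\ rational singularities: for perfect $P,Q$ one has $\Hom(\pi^*P,\pi^*Q)\cong\Hom(P,Q\otimes\bR\pi_*\cO_{\wt{X}})$ by the projection formula, and the support condition on $P,Q$ does not repair this when $\bR\pi_*\cO_{\wt{X}}\neq\cO_X$. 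The entire point of \cite{KL} (``categorical resolutions of \emph{irrational} singularities'') is that no such geometric embedding exists in general; their resolution is an Auslander-type gluing of derived categories of smooth projective varieties along a hyperresolution, not $\Perf$ of a single smooth variety $\wt{X}$. Consequently your subsequent extend-by-zero step has no smooth variety to start from, and the argument collapses precisely in the case the corollary is supposed to cover.

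The paper's proof avoids this by reversing the order of operations: first compactify $X\subset\bar{X}$ by Nagata \cite{N}; since $Z$ is proper it is closed in $\bar{X}$, so $\Perf_Z(X)\simeq\Perf_Z(\bar{X})\subset\Perf(\bar{X})$, reducing to the case of a proper ambient scheme. Then \cite[Theorem 6.12]{KL}, applied directly to the proper scheme $\bar{X}$, produces a smooth and proper DG category together with a quasi-fully-faithful embedding of $\Perf(\bar{X})$---no geometric resolution $\pi$ and no second compactification are needed---and Proposition \ref{prop:cat_res_implies_prop} concludes. Your extension-by-zero step ($\Perf_W(\wt{X})\simeq\Perf_W(\bar{Y})$ for $W$ closed in both, via Thomason--Trobaugh localization) is fine in itself; it is its input that does not exist. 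Note also the paper's remark that this route uses only the classical Hodge-to-de Rham degeneration for the smooth projective pieces of the Kuznetsov--Lunts gluing, not the full strength of Theorem \ref{th:Kaledin_degen}.
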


\begin{proof}Choosing some compactification $X\subset \bar{X}$ (which exist by Nagata's compactification theorem \cite{N}), we get $\Perf_Z(X)\simeq \Perf(\bar{X}).$ Thus, we may and will assume $X=\bar{X}=Z.$ Then the result follows by applying Proposition \ref{prop:cat_res_implies_prop} with \cite{KL}[Theorem 6.12]. As in the proof of Corollary \ref{cor:conj_for_smooth_holds_for_D^b_coh}, we only use here the classical Hodge-to-de Rham degeneration.\end{proof}

The crucial point is the following theorem, which is of independent interest.

\begin{theo}\label{th:nilpotence_and_factorization} 1) Let $A$ be a DG algebra, and $a\in H^0(A)$ a nilpotent element. Then the corresponding morphism $f:\mk[x]\to A$ (where $|x|=0$) in $\Ho(\dgalg_{\mk})$ factors through $\mk[x]/x^n$ for a sufficiently large $n.$

2) If moreover $a^2=0$ in $H^0(A),$ then it suffices to take $n=6.$\end{theo}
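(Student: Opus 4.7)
My plan is to realise the factorization by constructing an $A_{\infty}$-morphism $g \colon \mk[x]/x^n \to A$, where $\mk[x]/x^n$ is treated as a strictly unital $A_\infty$-algebra with trivial differential and trivial higher operations, with $g_1(x) \in Z^0(A)$ a cocycle representing $a$. Such $A_\infty$-morphisms model morphisms in $\Ho(\dgalg_{\mk})$, and then the composition $\mk[x] \to \mk[x]/x^n \xto{g} A$ will recover $f$, since morphisms out of $\mk[x]$ in $\Ho(\dgalg_\mk)$ are classified by the value of $g_1(x)$ in $H^0(A)$.

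For part 1, fix a cocycle $\tilde{a} \in Z^0(A)$ with $[\tilde{a}] = a$; since $a^N = 0$ for some $N$, there exists $h_N \in A^{-1}$ with $d h_N = \tilde{a}^N$. I would build the components $g_k \colon (\mk[x]/x^n)^{\otimes k} \to A$ inductively from the $A_{\infty}$-morphism equations. At each stage the obstruction to extending $g$ lies in a cohomology group of $A$, but increasing $n$ introduces more relations $x^i x^j = 0$ (whenever $i+j \geq n$) and hence more freedom in the values $g_1(x^i)$ for $i \geq N$, each of which must be a coboundary since it represents $a^i = 0$. One then checks inductively that, for $n$ large enough in terms of the Massey-product structure of $a$ in $A$, every obstruction can be written as an explicit coboundary.

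For part 2 the sharper bound $n = 6$ in the case $a^2 = 0$ comes from a direct combinatorial construction. Choose $h \in A^{-1}$ with $dh = \tilde{a}^2$. With the naive initial choice $g_1(x) = \tilde{a}$, $g_1(x^i) = 0$ for $i \geq 2$, and $g_2(x,x) = h$, the $A_{\infty}$-morphism relation at the $g_3$ level forces $d\, g_3(x,x,x) = \tilde{a} h - h\tilde{a}$, so one must kill the triple Massey-product class $\langle a, a, a \rangle \in H^{-1}(A)$, which is non-zero in general. The extra generators $x^2, x^3, x^4, x^5$ of $\mk[x]/x^6$ provide four positions where $g_1$ may take non-trivial coboundary values (built from $\tilde{a}$ and $h$), and likewise the $g_k$'s on inputs drawn from $\{x, x^2, \ldots, x^5\}$ admit non-trivial choices; these extra parameters should suffice to absorb every Massey-type obstruction before the relation $x^6 = 0$ takes effect.

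The main obstacle is the explicit bookkeeping in part 2: one must solve all $A_\infty$-morphism equations needed up to the order at which $x^6 = 0$ is used, track signs carefully, and exhibit specific expressions in $\tilde{a}$ and $h$ for each $g_k(x^{i_1}, \dots, x^{i_k})$ so that every relation holds. Understanding precisely why $n = 6$ (rather than any smaller integer) is the minimal sufficient bound reflects the combinatorics of how many power-relations are needed to cancel the relevant Massey-type obstructions, and this verification constitutes the technical heart of the argument.
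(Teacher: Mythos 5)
There is a genuine gap in both parts. For part 1 your plan is to run obstruction theory directly against a general DG algebra $A$, hoping that ``increasing $n$ introduces more freedom'' until every obstruction becomes a coboundary. Nothing in the sketch substantiates this: the relevant obstruction groups (Hochschild cohomology of $\mk[x]/x^n$ with coefficients in $H^{\bullet}(A)$) have no reason to vanish for an arbitrary $A$, and no mechanism is offered by which enlarging $n$ kills a fixed obstruction class. The paper's proof of part 1 is not obstruction-theoretic at all: it views $A$ as an $\mk[x]\mhyphen A$-bimodule $A_f$, which is perfect over $\mk[x]\otimes A$ because $\mk[x]$ is smooth; nilpotence of $a$ gives $\mk[x^{\pm 1}]\Ltens{\mk[x]}A=0$, so $A_f$ lies in the image of $D_{\perf}(\Lambda_1\otimes A)$; then the Morita-homotopy equivalence $\Perf(\Lambda_1)\simeq\colim_n\PsPerf(\mk[x]/x^n)$ together with compactness of $A_f$ forces the bimodule to come from a finite stage $\PsPerf(\mk[x]/x^n)\otimes A$, and To\"en's internal Hom converts this into a morphism $g:\mk[x]/x^n\to A$ in $\Ho(\dgalg_{\mk})$ with $H^0(g)(x)=a$. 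This categorical compactness argument is the missing idea; your induction cannot replace it as stated.

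For part 2 you correctly identify the first obstruction $d\,g_3(x,x,x)=\tilde{a}h-h\tilde{a}$, i.e.\ the Massey product $\langle a,a,a\rangle$, and correctly intuit that the components on $x^2,\dots,x^5$ must absorb it; but ``these extra parameters should suffice'' is exactly the assertion that needs proof, and you concede the verification is not carried out. Moreover, brute-force bookkeeping cannot by itself explain why $n=6$ works: working (as you implicitly do) in the universal DG algebra $C=\mk\langle t_1,t_2\rangle$ with $dt_2=t_1^2$, $t_1\mapsto\tilde{a}$, $t_2\mapsto h$, the class of $[t_1,t_2]$ is non-zero in $H^{-1}(C)$, and the raw obstruction groups $HH^{n+1}(\mk[x]/x^6,H^{1-n}(C))$ are likewise non-zero (they are $\mk[\veps]$ and $\mk$ up to twist), so an unstructured induction has no reason to close. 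The paper's device is to make the whole problem $\bG_m$-equivariant: put the weight grading $\bw(x)=1$, $\bw(t_1)=1$, $\bw(t_2)=2$, compute $H^{\bullet}(C)\cong\Ext^{\bullet}_{\mk[y]/y^3}(\mk,\mk)$ via the completion $\hat{C}\cong\End^{A_{\infty}}_{\mk[y]/y^3}(\mk)$ (with $C\to\hat{C}$ a quasi-isomorphism), and then check via the $2$-periodic resolution of the diagonal $\mk[x]/x^6$-bimodule that the obstruction groups are $\mk[\veps](6)$ and $\mk(4)$ as $\bG_m$-modules --- hence have vanishing weight-zero part, which is where equivariant obstructions live. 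It is this twist computation, depending on the interplay between the relation $x^6$ and the weights of $t_1,t_2$, that singles out $n=6$; without the grading your ``Massey-type bookkeeping'' faces genuinely non-zero obstruction spaces and would fail.
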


Before we prove Theorem \ref{th:nilpotence_and_factorization}, we show how it allows to construct a counterexample to Conjecture \ref{conj:degeneration_for_proper_intro}.

\begin{theo}1) Let us denote by $y$ the variable of degree $1.$ Then there exists an $A_{\infty}$ $\mk[y]/y^3\mhyphen \mk[x]/x^6$-bimodule structure on the $1$-dimensional vector space $V=\mk\cdot z$ (where $|z|=0$) such that $\mu_3^V(x,z,y)=z.$ In particular, in the glued $A_{\infty}$-algebra
$$B=\begin{pmatrix}
\mk[y]/y^3 & 0\\
V & \mk[x]/x^6
\end{pmatrix}$$
we have $\str(v\mapsto \mu_3(x,v,y))=1.$ Therefore, by Corollary \ref{cor:corollary_on_m_3} this $A_{\infty}$-algebra (and any quasi-isomorphic DG algebra) does not satisfy Conjecture \ref{conj:degeneration_for_proper_intro}.

2) In particular, the proper DG category $\Perf^{\infty}(B)$ does not have a categorical resolution of singularities.\end{theo}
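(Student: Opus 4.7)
\textit{Proof plan.}
Part 2) is an immediate consequence of Part 1) together with Proposition \ref{prop:cat_res_implies_prop}: a categorical resolution of $\Perf^\infty(B)$ would provide a quasi-fully-faithful embedding of $\Perf(B)$ into $\Perf(A)$ for some smooth and proper DG algebra $A$, which by that proposition would force Conjecture \ref{conj:degeneration_for_proper_intro} to hold for $B$, contradicting Part 1). So the whole substance of the theorem lies in Part 1), and within Part 1) the only nontrivial step is the construction of the $A_\infty$-bimodule $V$; everything else is bookkeeping.

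Granting the existence of the bimodule, the supertrace identity and the failure of Conjecture \ref{conj:degeneration_for_proper_intro} are a direct computation. The glued algebra decomposes as $B=\mk[y]/y^3\oplus V\oplus \mk[x]/x^6$, and the gluing formula from Section 1 immediately shows that the endomorphism $T\colon v\mapsto \mu_3^B(x,v,y)$ of $B$ vanishes on the two algebra summands, because a mixed $\mu_n$ in a gluing requires the module argument to sit sandwiched between the two algebras. On $V$ it equals $(-1)^{l_1^1(x)+1}\mu_{1,1}^V(x,v,y)=\mu_{1,1}^V(x,v,y)=v$. Hence $T$ is the projection onto $V$; since $|z|=0$, $\str_B(T)=1$. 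Plugging $|x|=0$, $|y|=1$ into Corollary \ref{cor:corollary_on_m_3} gives $\langle x,\mathrm{B}(y)\rangle=-1\neq 0$, so Conjecture \ref{conj:degeneration_for_proper_intro} fails for $B$ and for any quasi-isomorphic DG algebra.

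The heart of the argument is therefore the construction of the $A_\infty$-bimodule on $V=\mk\cdot z$. By Remark \ref{rem:bimodule_as_bimorphism}, this is equivalent to giving a strictly unital $A_\infty$-bimorphism $f\colon(\mk[x]/x^6,(\mk[y]/y^3)^{op})\to\End_{\mk}(V)=\mk$. The degree constraint ($f_{r,s}$ has degree $1-r-s$ with output in degree $0$) combined with $|x|=0$ and $|y|=1$ forces only the scalars $f_{r,s}(x^{a_1},\dots,x^{a_r};y^{b_1},\dots,y^{b_s})$ with $a_i\in\{1,\dots,5\}$, $b_j\in\{1,2\}$, and $\sum b_j=r+s-1$ to be potentially non-zero; strict unitality kills any monomial containing a $1$. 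I would prescribe $f_{1,1}(x,y)=1$ and determine the remaining scalars by induction on $r+s$, at each stage using Theorem \ref{th:nilpotence_and_factorization}(2) to supply the higher coefficients: its proof produces, for any DG algebra carrying a square-zero class in $H^0$, a factorization $\mk[x]\to\mk[x]/x^6\to A$ whose minimal-model data yields precisely the higher-order pattern needed to annihilate the bimorphism obstructions.

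The main obstacle is verifying that this obstruction cascade actually closes off, i.e.\ that the finite system of quadratic scalar equations is consistently solvable with $f_{1,1}(x,y)=1$. The exponents $6$ and $3$ are expected to be sharp: replacing $x^6$ by $x^n$ for $n<6$, or $y^3$ by $y^n$ for $n<3$, should leave an unresolvable obstruction, matching the sharpness of the bound $n=6$ in Theorem \ref{th:nilpotence_and_factorization}(2) together with a parity analogue for the odd variable $y$.
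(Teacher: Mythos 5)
Your reduction of part 2) to part 1) via Proposition \ref{prop:cat_res_implies_prop}, and your bookkeeping for the supertrace (the operator $v\mapsto\mu_3^B(x,v,y)$ vanishes on the two algebra summands of the gluing, equals the identity on $V$, and Corollary \ref{cor:corollary_on_m_3} with $|x|=0$, $|y|=1$ gives $\langle x,B(y)\rangle=-1\neq 0$) are correct and agree with the paper, which treats these steps as clear. The genuine gap is exactly the one you concede: the construction of the bimodule. You recast it via Remark \ref{rem:bimodule_as_bimorphism} as a finite system of quadratic equations on the scalars $f_{r,s}$ of a bimorphism $(\mk[x]/x^6,(\mk[y]/y^3)^{op})\to\mk$, set $f_{1,1}(x,y)=1$, and assert that Theorem \ref{th:nilpotence_and_factorization}(2) will ``supply the higher coefficients'' so that the ``obstruction cascade closes off'' --- but you never say how the theorem feeds into this system, and you explicitly leave its solvability unverified. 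As written this is a restatement of the problem, not a proof: with target the bare algebra $\End_{\mk}(V)=\mk$ you have discarded the structure that makes the obstruction groups computable, and nothing in your text excludes a nonvanishing obstruction at some stage $r+s$.

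The missing idea, which makes the step a one-liner, is to use the other reformulation, Remark \ref{rem:what_is_bimodule}: fixing on $V=\mk$ its standard right $\mk[y]/y^3$-module structure (the simple module), the remaining datum of the bimodule is precisely an $A_\infty$-morphism $g:\mk[x]/x^6\to\End^{\infty}_{\mk[y]/y^3}(\mk)$. The target is an honest DG algebra, and $H^0$ of it is $\Ext^0_{\mk[y]/y^3}(\mk,\mk)\cong\mk[\veps]$, the dual numbers --- a square-zero class in degree $0$. This is exactly the situation of Theorem \ref{th:nilpotence_and_factorization}(2), which applied as a black box (with $a=\veps$) produces $g$ with $g_1(x)$ representing $\veps$, whence $\mu_3^V(x,z,y)=z$. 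All of the obstruction theory you were attempting to redo is already packaged inside the proof of that theorem: there one replaces $\End^{\infty}_{\mk[y]/y^3}(\mk)$ by the quasi-isomorphic free DG algebra $C$ (Lemmas \ref{lem:cohom_of_hat_C} and \ref{lem:q_iso_with_completion}), and the obstructions land in the $\bG_m$-weight-zero parts of $HH^{n+1}(\mk[x]/x^6,H^{1-n}(C))$, which vanish by Lemma \ref{lem:computattion_of_HH_cohom}; the weight grading, absent from your scheme, is what forces the vanishing. Your closing remarks on the sharpness of the exponents $6$ and $3$ are speculation and are not needed for the theorem.
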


\begin{proof}1) An easy computation shows that 
$\Ext^0_{\mk[y]/y^3}(\mk,\mk)=\mk[\veps]$ (dual numbers). 
By Theorem \ref{th:nilpotence_and_factorization} 2), we have an $A_{\infty}$-morphism 
$g:\mk[x]/x^6\to \End^{A_{\infty}}_{\mk[y]/y^3}(\mk),$ 
such that $\bbar{g_1(x)}=\veps\in H^0(\End^{A_{\infty}}_{\mk[y]/y^3}(\mk)).$ This gives the desired $A_{\infty}$-bimodule structure on $V.$ The rest conclusions are clear.

2) follows from 1) and Proposition \ref{prop:cat_res_implies_prop}.
\end{proof}

\begin{proof}[Proof of Theorem \ref{th:nilpotence_and_factorization}, part 1)] Let us denote by $A_f$ the $\mk[x]\mhyphen A$-bimodule which equals $A$ as an $A$-module, and whose $\mk[x]$-module structure comes from $f.$ Since the algebra $\mk[x]$ is smooth, we have $A_f\in D_{\perf}(\mk[x]\otimes A).$ Since $a\in H^0(A)$ is nilpotent, we have $\mk[x^{\pm 1}]\Ltens{\mk[x]}A=0.$ We conclude that $A_f$ is contained in the essential image of $D_{\perf}(\Lambda_1\otimes A)\hto D_{\perf}(\mk[x]\otimes A).$

Now, let us note that in $\Ho(\dgcat_{\mk})$ we have $\Perf(\Lambda_1)\simeq \colim_{n}\PsPerf(\mk[x]/x^n).$ It follows that we have an equivalence of triangulated categories $$D_{\perf}(\Lambda_1\otimes A)\simeq \colim_n D_{\perf}(\PsPerf(\mk[x]/x^n)\otimes A).$$ Therefore, there exists $n>0$ such that $A_f$ is contained in the essential image of $D_{\perf}(\PsPerf(\mk[x]/x^n)\otimes A).$ Let us denote by $\tilde{M}\in D_{\perf}(\PsPerf(\mk[x]/x^n)\otimes A)$ an object whose image is isomorphic to $A_f.$ We have a natural functor $$\Phi:\PsPerf(\mk[x]/x^n)\otimes\Perf(A)\to \bR\un{\Hom}(\mk[x]/x^n,\Perf(A)).$$
By construction, the $\mk[x]/x^n\mhyphen A$-bimodule $\Phi(\tilde{M})$ is quasi-isomorphic to $A$ as an $A$-module Choosing an isomorphism $\Phi(\tilde{M})_{\mid A}\xto{\sim} A$, we obtain the following composition morphism in $\Ho(\dgalg_{\mk}):$ 
$$g:\mk[x]/x^n\to \bR\End_A(\Phi(\tilde{M}))\xto{\sim} A.$$ By construction, $H^0(g)(x)=a.$ Thus, $g$ factors $f$ through $\mk[x]/x^n.$ This proves part 1)\end{proof}

The proof of part 2) of Theorem \ref{th:nilpotence_and_factorization} requires some computations which we split into several lemmas.

First, we may replace the abstract algebra $A$ by the concrete DG algebra $C$ which  was used in Section \ref{sec:disproving_version_for_smooth}. Recall that it is freely generated by the elements $t_1,$ $t_2$ with $|t_1|=0,$ $|t_2|=-1,$ and $dt_1=0,$ $dt_2=t_1^2.$ Indeed choosing a representative $\tilde{a}\in A^0$ of $a,$ and an element $h\in A^{-1}$ such that $dh=\tilde{a}^2,$ we obtain a morphism of DG algebras $C\to A,$ $t_1\mapsto \tilde{a},$ $t_2\mapsto h.$ Thus, we may assume that $A=C$ and $a=\bbar{t_1}.$

It will be very useful to introduce an additional $\Z$-grading on $C,$ which  can be thought of as a $\bG_m$-action. We will denote this grading by $\bw,$ putting $\bw(t_1)=1,$ $\bw(t_2)=2,$ and then extend by the rule $\bw(uv)=\bw(u)+\bw(v).$ Clearly, the differential $d$ has degree zero with respect to $w.$ We thus have a decomposition of $C$ as a complex: $C=\bigoplus\limits_{n\geq 0}C^{\bullet,n}.$ 

Let us define $\hat{C}:=\prod\limits_{n\geq 0}C^{\bullet,n}.$ This is also a DG algebra, and we have a map $C\to\hat{C}$. The homogeneous elements of degree $-m$ in $\hat{C}$ are just non-commutative power series in $t_1,t_2$ such that in each monomial there are exactly $m$ copies of $t_2.$

\begin{lemma}\label{lem:cohom_of_hat_C} The cohomology algebra $H^{\bullet}(\hat{C})$ is generated by the elements $u_1=\bbar{t_1}$ and $u_2=\bbar{[t_1,t_2]},$ with two relations: $u_1^2=0,$ $u_1u_2+u_2u_1=0.$\end{lemma}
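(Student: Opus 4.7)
The plan is a two-step argument: first verify the cocycle conditions and the listed relations, then show that the resulting graded algebra map
\[ \varphi\colon R:=\mk\langle u_1,u_2\rangle/(u_1^2,u_1u_2+u_2u_1)\longrightarrow H^\bullet(\hat C) \]
is an isomorphism. Step~1 is immediate: $dt_1=0$ and $d[t_1,t_2]=t_1\cdot t_1^2-t_1^2\cdot t_1=0$ show that $u_1$ and $u_2$ are cocycles; $t_1^2=dt_2$ gives $u_1^2=0$; and the Leibniz identity
\[ d(t_2^2)=(dt_2)t_2-t_2(dt_2)=t_1^2t_2-t_2t_1^2=t_1[t_1,t_2]+[t_1,t_2]t_1 \]
gives $u_1u_2+u_2u_1=0$.

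For Step~2, since $d$ preserves the weight grading $\bw$ and each $C^{\bullet,n}$ is finite-dimensional, we have $H^\bullet(\hat C)=\prod_n H^\bullet(C^{\bullet,n})$, so it suffices to argue weight by weight. Pushing $u_1$'s to the left via $u_2u_1=-u_1u_2$ and using $u_1^2=0$ yields the normal-form basis $\{u_2^a,u_1u_2^a:a\geq 0\}$ of $R$, with bigraded Poincar\'e series $P_R(q,z)=(1+q)/(1-q^3z^{-1})$; counting non-commutative monomials in $t_1,t_2$ with $a$ copies of $t_1$ and $k$ copies of $t_2$ gives
\[ P_C(q,z)=\sum_{a,k\geq 0}\binom{a+k}{k}q^{a+2k}z^{-k}=\frac{1}{1-q-q^2z^{-1}}. \]
Both series reduce to $(1-q+q^2)^{-1}$ upon setting $z=-1$, so the Euler characteristics of $R^{\bullet,n}$ and $C^{\bullet,n}$ match in every weight $n$. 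Since $R^{\bullet,n}$ has dimension at most $1$ in each cohomological degree, once $\varphi$ is known to be surjective bidegree-wise, the matching Euler characteristics force it to be a bidegree-wise isomorphism.

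The main obstacle is therefore proving that $\varphi$ is surjective in each bidegree. One approach is the filtration $F_pC$ by the number of $t_2$-factors in a monomial: the associated graded has zero differential, and the induced differential on $E_1^{-k}\cong\mk[t_1]^{\otimes(k+1)}$ is the ``bar-type'' operator
\[ (f_0,\ldots,f_k)\mapsto\sum_{i=1}^k(-1)^{i-1}(f_0,\ldots,f_{i-1}t_1^2f_i,\ldots,f_k). \]
A somewhat involved direct computation shows that $H^\bullet(E_1)$, bigraded by weight and cohomological degree, agrees with $R$, and a dimension count using the Poincar\'e series of $R$ forces all higher differentials $d_r$, $r\geq 2$, to vanish. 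A concrete alternative is a monomial-by-monomial reduction: using $t_1^2=dt_2$ one eliminates each occurrence of $t_1^2$ in a monomial modulo coboundaries, showing that every cocycle is cohomologous to a $\mk$-linear combination of $t_1^a[t_1,t_2]^b$ with $a\in\{0,1\}$ and $b\geq 0$, which directly yields surjectivity.
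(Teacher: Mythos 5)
Your Step 1 and the counting framework of Step 2 are sound: the relations do follow from $t_1^2=dt_2$ and $d(t_2^2)=t_1[t_1,t_2]+[t_1,t_2]t_1$; cohomology commutes with products, so $H^\bullet(\hat C)=\prod_n H^\bullet(C^{\bullet,n})$ with each $C^{\bullet,n}$ finite-dimensional; your Poincar\'e series identities are correct; and since $R^{\bullet,n}$ is concentrated in a single bidegree of dimension at most $1$, bidegree-wise surjectivity of $\varphi$ plus the Euler-characteristic match would indeed force an isomorphism. But there is a genuine gap exactly at what you yourself call ``the main obstacle'': surjectivity is never proved. Both routes you offer assert their core step instead of carrying it out. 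In the spectral-sequence route, the whole content of the lemma sits inside the sentence ``a somewhat involved direct computation shows that $H^\bullet(E_1)$ agrees with $R$'': computing the cohomology of $\bigoplus_k \mk[t_1]^{\otimes(k+1)}$ with the insertion differential is a problem of essentially the same size as the original one, and you give no indication of how to do it (granting it, the degeneration step is fine, since $d_r$ preserves the weight and $E_2^{\bullet,n}$ has total dimension at most $1$, so no differential has both a source and a target). In the alternative route, the proposed rewriting does not obviously terminate: replacing $u\,t_1^2\,v$ modulo coboundaries via $u\,t_1^2\,v=(-1)^{|u|}d(u t_2 v)-(-1)^{|u|}(du)t_2v+u t_2(dv)$ raises the $t_2$-count while the differentials of $u$ and $v$ recreate a factor $t_1^2$ for each $t_2$ they contain, so ``one eliminates each occurrence of $t_1^2$'' is not a reduction you have shown to converge --- it is really the $E_1$-computation in disguise.

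For comparison, the paper sidesteps the direct computation entirely: it identifies the DG algebra $\hat C$ with the endomorphism algebra $\End^{A_{\infty}}_{\mk[y]/y^3}(\mk)$ (a completed cobar-type construction, with your weight grading matching the grading by powers of $y$), so that $H^\bullet(\hat C)\cong\Ext^\bullet_{\mk[y]/y^3}(\mk,\mk)$, and then computes this Ext-algebra from an explicit small semifree resolution $P\to\mk$, together with explicit lifts $\wt{v_n}$ of the basis cocycles that verify $\wt{v_1}\wt{v_2}+\wt{v_2}\wt{v_1}=0$ and $\wt{v_1}(\wt{v_2})^k=(-1)^k\wt{v_{2k+1}}$. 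This yields simultaneously the dimension bound (your missing surjectivity) and the ring structure. To repair your write-up along your own lines, you would have to actually compute $H^\bullet(E_1,d_1)$ --- for instance using that $\mk[t_1]$ is free of rank $2$ over $\mk[t_1^2]$ --- or else import the Ext computation as the paper does.
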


\begin{proof}Indeed, it is easy to see that the DG algebra $\hat{C}$ is isomorphic to the endomorphism DG algebra $\End^{A_{\infty}}_{\mk[y]/y^3}(\mk).$
% in the category of strictly unital $A_{\infty}$-modules. 
Thus, we have an isomorphism of graded algebras $H^{\bullet}(\hat{C})\cong \Ext^{\bullet}_{\mk[y]/y^3}(\mk,\mk).$ To compute this Ext-algebra, we take the semi-free resolution $P\to\mk.$ The underlying graded $\mk[y]/y^3$-module is defined by
$$P^{gr}:=\bigoplus_{n=0}^{\infty}e_n\cdot \mk[y]/y^3,$$
where $|e_{n}|=\lfloor \frac{n}{2}\rfloor.$ The differential is given by $d(e_0)=0,$ and $d(e_{2k+1})=e_{2k}y,$ $d(e_{2k+2})=e_{2k+1}y^2$ for $k\geq 0.$ The morphism $P\to \mk$ sends $e_0$ to $1,$ and $e_n$ to $0$ for $n>0.$ Clearly, this is a quasi-isomorphism. 

We see that $\Ext^{\bullet}_{\mk[y]/y^3}(\mk,\mk)\cong \Hom_{\mk[y]/y^3}^{\bullet}(P,\mk),$ where the last complex has zero differential, and is equipped with the homogeneous basis $\{v_n\}_{n\geq 0},$ where $|v_n|=\lfloor\frac{n}{2}\rfloor,$ and $v_i(e_j)=\delta_{ij}.$ It is easy to see that the elements $v_1$ and $v_2$ correspond to the classes $u_1,u_2\in H^{\bullet}(\hat{C}),$ mentioned in the formulation of the lemma. Clearly, we have $u_1^2=0.$ It remains to show that $u_1u_2=-u_2u_1,$ and $u_1u_2^k\ne 0$ for $k\geq 0.$ Let us choose the lifts $\wt{v_n}\in\End_{\mk[y]/y^3}(P)$ of $v_n,$ putting
$$\wt{v_{2k}}(e_n)=\begin{cases}(-1)^{nk}e_{n-2k} & \text{for }n\geq 2k,\\
0 & \text{otherwise};\end{cases}\quad \wt{v_{2k+1}}(e_n)=\begin{cases}e_{n-2k-1} & \text{for }n\text{ odd,}n\geq 2k+1,\\
(-1)^k e_{n-2k-1}y & \text{for }n\text{ even,}n\geq 2k+2,\\
0 & \text{otherwise.}\end{cases}$$
It is easy to check that $\wt{v_n}$'s super-commute with the differential, and that $\wt{v_1}\wt{v_2}+\wt{v_2}\wt{v_1}=0,$ $\wt{v}_1(\wt{v_2})^k=(-1)^k\wt{v_{2k+1}}.$ This proves the lemma.\end{proof}

\begin{lemma}\label{lem:q_iso_with_completion}The natural inclusion $C\to\hat{C}$ is a quasi-isomorphism.\end{lemma}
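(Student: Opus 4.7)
The plan is to exploit the fact that the inclusion $C\hookrightarrow\hat{C}$ is, by construction, the canonical map from a direct sum of complexes to their direct product. Since $d(t_1)=0$ and $d(t_2)=t_1^2$ both have weight zero with respect to $\bw$, the differential preserves the weight grading, so we have decompositions of complexes $C=\bigoplus_{n\geq 0}C^{\bullet,n}$ and $\hat C=\prod_{n\geq 0}C^{\bullet,n}$, and the inclusion is precisely the obvious map from the sum to the product.

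Next, I would use that the cohomology functor commutes with arbitrary direct sums and arbitrary direct products in the category of complexes of $\mk$-vector spaces (both operations are exact on $\mk$-vector spaces). This yields
$$H^i(C)=\bigoplus_{n\geq 0}H^i(C^{\bullet,n}),\qquad H^i(\hat C)=\prod_{n\geq 0}H^i(C^{\bullet,n}),$$
and the induced map on cohomology is the canonical map $\bigoplus\to\prod$.

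It therefore suffices to check that for each fixed cohomological degree $i$ only finitely many $n$ contribute. This I would read off from Lemma~\ref{lem:cohom_of_hat_C}: the weight grading descends to $H^\bullet(\hat C)$, and because $\bw(u_1)=\bw(t_1)=1$ and $\bw(u_2)=\bw([t_1,t_2])=3$, the basis element $u_2^k$ lies in cohomological degree $-k$ and weight $3k$, while $u_1 u_2^k$ lies in cohomological degree $-k$ and weight $3k+1$. Hence in each cohomological degree $-k$ there are exactly two nonzero weight pieces of $H^\bullet(\hat C)$, so the map $\bigoplus_n H^i(C^{\bullet,n})\to\prod_n H^i(C^{\bullet,n})$ is an isomorphism, giving the desired quasi-isomorphism $C\to\hat C$.

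The main obstacle has already been packaged into the previous lemma; given Lemma~\ref{lem:cohom_of_hat_C}, this step is essentially bookkeeping. The one point one must verify carefully is that the generators $u_1,u_2$ of $H^\bullet(\hat C)$ are genuinely homogeneous for the weight grading with the claimed weights $1$ and $3$, but this is immediate from the definition of $\bw$ since they are represented by the homogeneous cocycles $t_1$ and $[t_1,t_2]$.
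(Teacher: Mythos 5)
Your proposal is correct and takes essentially the same route as the paper: both compare $C=\bigoplus_{n\geq 0}C^{\bullet,n}$ with $\hat{C}=\prod_{n\geq 0}C^{\bullet,n}$ using that cohomology of complexes of $\mk$-vector spaces commutes with both direct sums and direct products, and both extract from Lemma~\ref{lem:cohom_of_hat_C} that in each cohomological degree only finitely many weight components of $H^{\bullet}(\hat{C})$ are nonzero, so the map $\bigoplus\to\prod$ is an isomorphism on cohomology. The only cosmetic difference is that the paper packages the finiteness as an abstract observation (if $\dim H^n\bigl(\prod_m \cK_m^{\bullet}\bigr)<\infty$ for all $n$, then $\bigoplus_m \cK_m^{\bullet}\to\prod_m \cK_m^{\bullet}$ is a quasi-isomorphism, using only $\dim H^n(\hat{C})<\infty$), whereas you pinpoint the contributing weights $3k$ and $3k+1$ in degree $-k$ explicitly; the two formulations are interchangeable.
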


\begin{proof}We already know that $\dim H^n(\hat{C})<\infty$ for all $n\in\Z.$ It remains to observe the following: for any infinite sequence of complexes of vector spaces $\cK_{0}^{\bullet},\cK_1^{\bullet},\dots$ such that $\dim H^n(\prod\limits_{n\geq 0}\cK_n^{\bullet})<\infty$ for all $n\in\Z,$ the morphism $\bigoplus\limits_{n\geq 0}\cK_n^{\bullet}\to\prod\limits_{n\geq 0}\cK_n^{\bullet}$ is a quasi-isomorphism. Applying this observation to the complexes $C^{\bullet,n},$ we conclude the proof.\end{proof}

We now construct a strictly unital $A_{\infty}$-morphism $\mk[x]/x^6\to C,$ using obstruction theory. First, we introduce the weight grading ($\bG_m$-action) on $\mk[x]/x^6$ by putting $\bw(x)=1.$ Our $A_{\infty}$-morphism will be compatible with the $\bG_m$-actions, and its component $f_1$ is given by \begin{equation}\label{eq:formula_for_f_1} f_1(x^k)=t_1^k\text{ for }0\leq k\leq 5.\end{equation} Note that all the cohomology spaces $H^n(C)$ are $H^0(C)\mhyphen H^0(C)$-bimodules, hence also over $\mk[x]/x^6\mhyphen\mk[x]/x^6$-bimodules (via $f_1$).

Let us also note that for any $\mk[x]/x^6\mhyphen\mk[x]/x^6$-bimodule $M,$ equipped with the compatible $\bG_m$-action, the Hochschild cohomology $HH^{\bullet}(\mk[x]/x^6,M)$ also becomes bigraded; the second grading again corresponds to the $G_m$-action. For a vector space $V$ equipped with a $\bG_m$-action, $V=\bigoplus_{n\in Z}V^n,$ we denote $V(k)$ the same space with a twisted $\bG_m$-action: $V(k)^n=V^{k+n}.$

\begin{lemma}\label{lem:computattion_of_HH_cohom}We have $HH^{2k+2}(\mk[x]/x^6,H^{-2k}(C))\cong \mk[\veps](6)$ for $k\geq 0,$ and $HH^{2k+3}(\mk[x]/x^6,H^{-2k-1}(C))\cong \mk(4)$ ($\bG_m$-equivariant isomorphisms).\end{lemma}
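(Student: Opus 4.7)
The plan is to compute $HH^{\bullet}(\mk[x]/x^6, M)$ for the bimodules $M = H^{-m}(C)$ directly, using the standard 2-periodic $R^e$-resolution of $R := \mk[x]/x^6$ and tracking the induced $\bG_m$-gradings throughout.

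First, I would set up the $\bG_m$-equivariant resolution
\[
\cdots \to R^e(-12) \xrightarrow{d_1} R^e(-7) \xrightarrow{d_0} R^e(-6) \xrightarrow{d_1} R^e(-1) \xrightarrow{d_0} R^e \to R \to 0,
\]
where $R^e := R\otimes R^{op}$, the map $d_0$ is multiplication by $x\otimes 1 - 1\otimes x$ (weight $1$), and $d_1$ is multiplication by $\sum_{i+j=5}x^i\otimes x^j$ (weight $5$). Applying $\Hom_{R^e}(-, M)$ then yields a cochain complex whose term in position $n$ is $M(a_n)$ with $a_{2k}=6k$ and $a_{2k+1}=6k+1$, and whose differentials alternate between $d_0^\ast(m) = xm - mx$ and $d_1^\ast(m) = \sum_{i+j=5}x^imx^j$.

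Next, by Lemma \ref{lem:cohom_of_hat_C} and Lemma \ref{lem:q_iso_with_completion}, $H^{\bullet}(C)$ has basis $\{u_2^n, u_1 u_2^n\}_{n\geq 0}$ subject to $u_1^2 = 0$ and $u_1u_2 + u_2 u_1 = 0$, with weights $\bw(u_2^n) = 3n$, $\bw(u_1 u_2^n) = 3n+1$. Since $f_1(x) = t_1$, both left and right actions of $x$ on $H^{\bullet}(C)$ are multiplication by $u_1$. I would then carry out two short computations. On $H^{-2k}(C)$ the element $u_2^{2k}$ commutes with $u_1$ (as $(-1)^{2k}=1$), so $d_0^\ast$ vanishes, and $d_1^\ast$ vanishes because $u_1^{i+j} u_2^{2k} = 0$ whenever $i+j \geq 2$. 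Thus the whole complex has zero differential on $H^{-2k}(C)$, and the cohomology in position $2k+2$ is the full group $H^{-2k}(C)(6k+6)$; this has $\bG_m$-weights $\{-6, -5\}$, agreeing with $\mk[\veps](6)$ once one recalls that under the identification $\mk[\veps] = H^0(\hat C)$ the element $\veps$ corresponds to $u_1$ of weight $1$.

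On $H^{-2k-1}(C)$ the element $u_2^{2k+1}$ anticommutes with $u_1$, giving $d_0^\ast(u_2^{2k+1}) = 2u_1 u_2^{2k+1}$ and $d_0^\ast(u_1 u_2^{2k+1}) = 0$, while $d_1^\ast$ still vanishes because $u_1^5 = u_1^6 = 0$. The cohomology in position $2k+3$ is therefore the quotient of the weight-$\{-4,-3\}$ space $H^{-2k-1}(C)(6k+7)$ by the weight-$(-3)$ line $\mk\cdot u_1 u_2^{2k+1}$, yielding a one-dimensional space of weight $-4$, namely $\mk(4)$. The only genuinely delicate point is the $\bG_m$-weight bookkeeping in the periodic resolution: once the shifts $a_{2k}=6k$, $a_{2k+1}=6k+1$ are fixed, the required algebraic identities are immediate consequences of the two relations $u_1^2 = 0$ and $u_1 u_2 + u_2 u_1 = 0$.
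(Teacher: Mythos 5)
Your proposal is correct and takes essentially the same approach as the paper: the paper uses exactly this $2$-periodic $\bG_m$-equivariant resolution of the diagonal $\mk[x]/x^6$-bimodule together with the identifications $H^{-2k}(C)\cong\mk[\veps](-6k)$ (diagonal) and $H^{-2k-1}(C)\cong(\mk[\veps])_{\sigma}(-6k-3)$ (anti-diagonal) from Lemmas \ref{lem:cohom_of_hat_C} and \ref{lem:q_iso_with_completion}, and then declares the result ``an elementary computation.'' You have simply written that computation out in full, and your details check out, including the weight shifts $a_{2k}=6k$, $a_{2k+1}=6k+1$, the vanishing of the norm-type differential from $u_1^2=0$, and the identity $d_0^{\ast}(u_2^{2k+1})=2u_1u_2^{2k+1}$ (which uses $\cha(\mk)\neq 2$, consistent with the paper's characteristic-zero setting).
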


\begin{proof}We have the following $\bG_m$-equivariant resolution of the diagonal bimodule:
$$\dots\xto{d_3}\mk[x]/x^6\otimes \mk[x]/x^6(-6)\xto{d_2} \mk[x]/x^6\otimes \mk[x]/x^6(-1)\xto{d_1}\mk[x]/x^6\otimes \mk[x]/x^6\xto{m}\mk[x]/x^6,$$
where $d_{2k+1}=x\otimes 1-1\otimes x,$ and $d_{2k}=x^5\otimes 1+x\otimes x^4+\dots1\otimes x^5.$ Further, by Lemmas \ref{lem:cohom_of_hat_C} and \ref{lem:q_iso_with_completion} we know the $\bG_m$-equivariant $H^0(C)\mhyphen H^0(C)$-bimodules $H^n(C).$ Namely, $H^{-2k}(C)\cong\mk[\veps](-6k)$ (twisted diagonal bimodule), and $H^{-2k-1}(C)\cong (\mk[\veps])_{\sigma}(-6k-3)$ -- the twisted anti-diagonal bimodule. For the later, the left and right $H^0(C)$-actions are given respectively by $\veps\cdot 1=\veps,$ $1\cdot \veps=-\veps.$ The result follows by an elementary computation.\end{proof}

We are finally able to finish the proof of the theorem.

\begin{proof}[Proof of Theorem \ref{th:nilpotence_and_factorization}, part 2).] As we already mentioned, we will construct (or rather show the existence of) a $\bG_m$-equivariant strictly unital $A_{\infty}$-morphism  $f:\mk[x]/x^6\to C,$ where $f_1$ is given by \eqref{eq:formula_for_f_1}. Since $H^0(f_1)$ is a homomorphism, we can construct $f_2$ such that the required relation is satisfied. Suppose that we have already constructed $\bG_m$-equivariant $f_1,\dots,f_n$ (where $n\geq 2$) satisfying all the relations for the $A_{\infty}$-morphism that involve only $f_1,\dots,f_n.$ We want to construct the $(n+1)$ components $f_1,\dots,f_n',f_{n+1}$ (again, satisfying all the relevant relations) where only $f_n$ is possibly being replaced by another map $f_n'.$ The standard obstruction theory tells us that the obstruction to this is given by a class in $HH^{n+1,0}(\mk[x]/x^6,H^{1-n}(C))$ (the $\bG_m$-invariant part). Applying Lemma \ref{lem:computattion_of_HH_cohom}, we see that this space vanishes. Thus, proceeding inductively we can construct the desired $A_{\infty}$-morphism $f.$ This proves the theorem.\end{proof}


\begin{thebibliography}{GabRom}

\bibitem[CT]{CT} D.-C. Cisinski and G. Tabuada, "Non-connective K-theory via universal invariants," Compos. Math. 147,
1281-1320 (2011).

\bibitem[Co]{Co}A.~Connes, "Non-commutative geometry", Academic Press,
1994.

\bibitem[DI]{DI}P.~Deligne, L.~Illusie, "Rel\'evements modulo $p^2$ et d\'ecomposition du complexe de
de Rham", Inv. Math. 89 (1987), 247-270.

\bibitem[Dr]{Dr}V.~Drinfeld, "DG quotients of DG categories", J. Algebra 272 (2004), no. 2, 643-691.

\bibitem[E1]{E} A.~I.~Efimov, "Generalized non-commutative degeneration conjecture", Proc. Steklov Inst. Math. 290 (2015), no. 1, 1-10. 

\bibitem[E2]{E2}A.~I.~Efimov, "Homotopy finiteness of some DG categories from algebraic geometry", arXiv:1308.0135 (preprint), to appear in JEMS.

\bibitem[FT]{FT}B.~L.~Feigin and B.~L.~Tsygan, "Cohomologies of Lie algebras of generalized Jacobi matrices", Funkts. Anal.
Prilozh. 17 (2), 86-87 (1983) [Funct. Anal. Appl. 17, 153-155 (1983)].

\bibitem[Ho]{Ho}M.~Hovey, "Model categories", Mathematical surveys and monographs, Vol. 63, Amer. Math. 
Soc., Providence 1998.

\bibitem[Ka]{Ka}D.~Kaledin, "Spectral sequences for cyclic homology" Algebra, geometry, and physics in the 21st century, 99-129, Progr. Math., 324, Birkhäuser/Springer, Cham, 2017.

\bibitem[Ke1]{Ke}B.~Keller, "Deriving DG categories", Ann. Sci. \'Ecole Norm. Sup. (4) 27 (1994), no. 1, 63-102.

\bibitem[Ke2]{Ke2}B.~Keller, "On the cyclic homology category of exact categories", J. Pure Appl. Algebra 136 (1) (1999) 1-56.

\bibitem[Ke3]{Ke3}B.~Keller, "A-infinity algebras, modules and functor categories. Trends in representation theory of algebras and related topics", 67-93, Contemp. Math., 406, Amer. Math. Soc., Providence, RI, 2006.

\bibitem[Ko]{Ko}M.~Kontsevich, private communication, 2012.

\bibitem[KS]{KS}M.~Kontsevich, Y.~Soibelman, "Notes on A-infinity algebras, A-infinity categories and non-commutative geometry, I". In: Homological Mirror Symmetry. Lecture Notes in Physics, vol 757. Springer, Berlin, Heidelberg.

\bibitem[KL]{KL}A.~Kuznetsov, V.~Lunts, "Categorical resolutions of irrational singularities", International Mathematics Research Notices, Volume 2015, Issue 13, 1 January 2015, Pages 4536-4625.

\bibitem[L-H]{L-H}K.~Lef\`evre-Hasegawa, "Sur les $A_{\infty}$-cat\'egories", Ph.D. thesis, Universit\'e Paris 7, U.F.R. de Math\'ematiques, http://arXiv.org/abs/math.CT/0310337, 2003.

\bibitem[M]{M}A.~Mathew, "Kaledin's degeneration theorem and topological Hochschild homology", arXiv:1710.09045 (preprint).

\bibitem[N]{N}M.~Nagata, A generalization of the imbedding problem of an abstract variety in a complete variety. J. Math. Kyoto Univ. 3 1963 89-102.

\bibitem[S]{S}D.~Shklyarov, "Hirzebruch-Riemann-Roch-type formula for DG algebras", Proceedings of the London Mathematical Society, Volume 106, Issue 1, 1 January 2013, Pages 1-32.

\bibitem[Tab1]{Tab1} G.~Tabuada, "Th\'eorie homotopique des DG-cat\'egories", Th\'ese de L'Universit\'e Paris Diderot - Paris 7.

\bibitem[Tab2]{Tab2} G.~Tabuada, "Une structure de cat\'egorie de mod\'eles de Quillen sur la cat\'egorie des dg-cat\'egories", C. R. Acad. Sci. Paris Ser. I Math. 340 (1) (2005), 15-19.

\bibitem[To1]{To1}B.~To\"en, private communication, 2012.

\bibitem[To2]{T}B.~To\"en, "The homotopy theory of $dg$-categories and derived Morita theory",
Invent. Math. 167 (2007), no. 3, 615-667.

\bibitem[TV]{TV}B.~To\"en, M.~Vaqui\'e, "Moduli of objects in dg-categories", Ann. Sci. \'Ecole Norm. Sup. (4) 40 (2007), no. 3, 387-444.

\bibitem[Ts]{Ts}B.~L.~Tsygan, "The homology of matrix Lie algebras over rings and the Hochschild homology", Usp. Mat. Nauk
38 (2), 217-218 (1983) [Russ. Math. Surv. 38 (2), 198-199 (1983)].

\bibitem[W]{W} F. Waldhausen, "Algebraic K-theory of spaces," in Algebraic and Geometric Topology: Proc. Conf., New
Brunswick, NJ, 1983 (Springer, Berlin, 1985), Lect. Notes Math. 1126, pp. 318-419.

\end{thebibliography}
\end{document}